\newtheorem{thm}{Theorem}[section]
\newtheorem{lem}[thm]{Lemma}
\newtheorem{cor}[thm]{Corollary}
\newtheorem{prop}[thm]{Proposition}
\theoremstyle{remark}
\newcommand{\Cov}{\mathrm {cov}}
\newcommand{\cov}{{\mathrm {cov}}}
\newcommand{\Var}{\mathrm {var}}
\newcommand{\var}{\mathrm {var}}
\newcommand{\Exp}{{\mathbb E}}
\newcommand{\esp}{{\mathbb E}}
\newcommand{\pr}{{\mathbb P}}
\def\d{{\mathrm d}}
\newcommand{\convdistr}{\stackrel{\mbox{\tiny\rm d}}{\to}}
\newcommand{\convprob}{\stackrel{\mbox{\small\tiny p}}{\to}}
\begin{document}

\title{The tail empirical process for long memory stochastic volatility
  sequences\protect}

\author{Rafa{\l} Kulik\thanks{
Corresponding author: Department of Mathematics and Statistics,
University of Ottawa, 585 King Edward Avenue, Ottawa ON K1N 6N5,
Canada, email: rkulik@uottawa.ca, phone 1 613 562 5800 Ext. 3526.}
\and Philippe Soulier\thanks{ Department of Mathematics, University
Paris X, Batiment G, Bureau E18 200, Avenue de la R?publique 92000,
Nanterre Cedex, France, email: philippe.soulier@u-paris10.fr} }

\maketitle

\begin{abstract}
  This paper describes the limiting behaviour of tail empirical processes
  associated with long memory stochastic volatility models. We show that such a
  process has dichotomous behaviour, according to an interplay between the
  Hurst parameter and the tail index.
  On the other hand, the tail empirical process with random levels never
  suffers from long memory. This is very desirable from a practical point of
  view, since such a process may be used to construct the Hill estimator of the
  tail index. To prove our results we need to establish new results for
  regularly varying distributions, which may be of independent
  interest.
\end{abstract}

\section{Introduction}
The goal of this article is to study weak convergence results for
the tail empirical process associated with some long memory
sequences.  Besides of theoretical interests on its own, the results
are applicable in different statistical procedures based on several
extremes. A similar problem was studied in case of independent,
identically distributed random variables in \cite{Einmahl1992}, or
for weakly dependent sequences in \cite{Drees2000},
\cite{Drees2002}, \cite{Drees2003}, \cite{Rootzen2009}.

Our set-up is as follows.  Assume that $\{X_i \,, \ i \in \mathbb
Z\}$, is a stationary Gaussian process with unit variance and
covariance
\begin{gather}
  \label{eq:lrd}
  \rho_{i-j} = \Cov(X_i,X_j) = |i-j|^{2H-2} \ell_0(|i-j|) \; ,
\end{gather}
where $H\in (1/2,1)$ is the Hurst exponent and $\ell_0$ is a slowly
varying function at infinity, i.e. $\lim_{t\to\infty}
\ell_0(tx)/\ell_0(x)=1$ for all $x>0$. The sequence in this case is
referred to as an \textit{LRD} Gaussian sequence. We also consider
weakly dependent Gaussian sequences, i.e. such that
$\sum_{j=1}^{\infty}|\Cov(X_1,X_{j+1})|<\infty$.

We shall consider a stochastic volatility process defined as
$$
Y_i=\sigma(X_i)Z_i,\qquad i \in \mathbb Z,
$$
where $\sigma(\cdot)$ is a nonnegative, deterministic function and
that $\{Z,Z_i$, $i \in \mathbb Z\}$, is a sequence of i.i.d. random
variables, independent of the process $\{X_i\}$. We note, in
particular, that if $\Exp[Z^2]<\infty$ and $\Exp[Z]=0$, then the
$Y_i$s are uncorrelated, no matter  the assumptions on
dependence structure of the underlying Gaussian sequence.\

Stochastic volatility models have become popular in financial time
series modeling. In particular, if $H\in (1/2,1)$, these models are
believed to capture two standardized features of financial data: long
memory of squares or absolute values, and conditional
heteroscedascity. If $\sigma(x)=\exp(x)$, then the model is referred
to in the econometrics literature as \textit{Long Memory in
  Stochastic Volatility} (LMSV) and was introduced in
\cite{BreidtCratoLima1998}. For an overview of stochastic volatility
models with long memory we refer to
\cite{DeoHsiehHurvichSoulier2006}.

Let $F=F_i$, $i\ge 1$, be the marginal distribution of $Y_i$. We
want to consider the case where $F$ belongs to the domain of
attraction of an extreme value distribution with positive index
$\gamma$, i.e. there exist sequences $u_n$, $n\ge 1$,
$u_n\to\infty$, and $\sigma_n$, $n\ge 1$, such that the associated
conditional tail distribution function
\begin{equation}
  \label{eq:Tn}
  T_n(x)  = \frac{\bar F(u_n+\sigma_n x)}{\bar F(u_n)},
  \qquad  x\ge 0, \; n \ge 1,
\end{equation}
satisfies
\begin{equation}\label{eq:cond-tail}
  \lim_{n\to\infty}   T_n(x) = T(x) = \left(1+x\right)^{-1/\gamma}\; , \quad x\geq0
\;  .
\end{equation}
For the stochastic volatility model, this will be obtained through a
further specification.  Let $F_Z$ be the marginal distribution of
the noise sequence. We will assume that for some $\alpha\in
(0,\infty)$,
\begin{equation}\label{eq:Pareto-assumption}
  \bar F_Z(z) = \pr(Z > x) = x^{-\alpha} \ell(x) \; ,
\end{equation}
where $\ell$ is again a slowly varying function.  Assuming
(\ref{eq:Pareto-assumption}) and
$\Exp[\sigma^{\alpha+\epsilon}(X_1)]<\infty$ for some $\epsilon>0$,
we conclude by Breiman's Lemma \cite{Breiman1965} (see also
\cite[Proposition 7.5]{resnick:2007}) that
$$
\bar F(x) = \pr(Y_1>x) = \pr(\sigma(X_1)Z_1>x) \sim
\Exp[\sigma^{\alpha}(X_1)] \pr(Z_1>x) \; , \ \mbox{ as } x\to\infty.
$$
Consequently, $\bar F(\cdot)$ satisfies (\ref{eq:cond-tail}) with
$\sigma_n=u_n$ and $\gamma=1/\alpha$.

Similarly to \cite{Rootzen2009}, we define the tail empirical
distribution function and the tail empirical process, respectively,
as
$$
\tilde T_n(s) = \frac{1}{n\bar F(u_n)} \sum_{j=1}^n %
1_{\{Y_j
  > u_n+ u_n s\}} \; ,
$$
and
\begin{equation}\label{eq:te-def}
  e_n(s) = \tilde T_n(s)-T_n(s) \; , \ s \in [0,\infty) \; .
\end{equation}
From \cite{Rootzen2009} we conclude that under appropriate mixing
and other conditions on a stationary sequence $Y_i$, $i\ge 1$, the
tail empirical process converges weakly and the limiting covariance
is affected by dependence. In our case, the results
\cite{Rootzen2009} do not seem applicable. In fact, it will be shown
that we have two different modes of convergence. If $u_n$ is
\textit{large}, then $\sqrt{n\bar F(u_n)}$ is the proper scaling
factor and the limiting process is Gaussian with the same covariance
structure as in case of i.i.d.~random variables $Y_i$. Otherwise, if
$u_n$ is \textit{small}, then the limit is affected by long memory
of the Gaussian sequence. The scaling is different and the limit may
be non-normal. These results are presented in Section
\ref{sec:general}. Note that a similar dichotomous phenomenon was
observed in the context of sums of extreme values associated with
long memory moving averages, see \cite{Kulik2008a} for more details.
On the other hand, this dichotomous behaviour is in contrast with
the convergence of point processes based on stochastic volatility
models with regularly varying innovations, where (long range)
dependence does not affect the limit (See \cite{DavisMikosch2001}).

The process $e_n(\cdot)$ is unobservable in practice, since the
parameter $u_n$ depends on the unknown distribution $F$. Also, $u_n$
being \textit{large} or \textit{small} depends on a delicate balance
between the tail index $\alpha$ and the Hurst parameter $H$. In
order to overcome this, we consider as in \cite{Rootzen2009} a
process with random levels. There, we set $k=n\bar F(u_n)$ and
replace the deterministic level $u_n$ by $Y_{n-k:n}$, where
$Y_{n:n}\ge Y_{n-1:n}\ge \cdots\ge Y_{1:n}$ are the increasing order
statistics of the sample $Y_1,\dots,Y_n$.  The number $k$ can be
thought as the number of extremes used in a construction of the tail
empirical process. It turns out that if the number of extremes is
{\it small} (which corresponds to a {\it
  large} $u_n$ above), then the limiting process changes as compared to the one
associated with $e_n(\cdot)$, but the speed of convergence remains
the same. This has been already noticed in \cite{Rootzen2009} in the 
weakly dependent case. On the other hand, if $k$ is {\it large},
then the scaling from $e_n(\cdot)$ is no longer correct (see
Corollary \ref{cor:practical}). In fact, the process with random
levels has a faster rate of convergence and we claim in Theorem
\ref{thm:practical-1} that the rate of convergence and the limiting
process are not affected at all by long memory, provided that a
technical second order regular variation condition is fulfilled. The
reader is referred to Section \ref{sec:random-levels}. On the other
hand, it should be pointed out that our results are for \textit{the}
long memory stochastic volatility models. It is not clear for us
whether such phenomena will be valid for example for subordinated
long memory Gaussian sequences with infinite variance.

The results for the tail empirical process $e_n(\cdot)$ allow us to
obtain asymptotic normality and non-normality of intermediate
quantiles, as described in Corollary \ref{coro:intermediate}. On the
other hand, the tail empirical process with random levels allows the
study of the Hill estimator of the tail index $\alpha$ (Section
\ref{sec:Hill}). Consequently, as  shown in Corollary
\ref{cor:Hill}, long memory does not have influence on its
asymptotic behaviour. These theoretical observations are justified
by simulations in Section \ref{sec:numstudies}.

Last but not least, we have some contribution to the theory of
regular variation. To establish our results in the random level
case, we need to work under a second order regular variation
condition. Consequently, one has to establish in a Breiman's-type
lemma that such a condition is transferable from $\bar F_Z$ to $\bar
F$. This is done in Section \ref{sec:s-o-c}.

\section{Results}
\subsection{Tail empirical process}
\label{sec:general} Let us define a function $G_n$ on
$(-\infty,\infty)\times[0,\infty)$ by
\begin{equation}\label{eq:function-gn}
  G_n(x,s) = \frac{\pr(\sigma(x)Z_1>(1+s)u_n)}{\pr(Z_1>u_n)} \; .
\end{equation}
By Breiman's Lemma and the regular variation of $\bar F_Z$, we
conclude that for each $s\in [0,1]$, this function converges
pointwise to $T(s)G(x)$, where $G(x)=\sigma^{\alpha}(x)$.  A
stronger convergence can actually be proved (see Section
\ref{sec:proof-s-o-c} for a proof).
\begin{lem} \label{lem:convergence-uniforme-hermite}
  If~(\ref{eq:Pareto-assumption}) holds and
  $\esp[\sigma^{\alpha+\epsilon}(X)]<\infty$ for some $\epsilon>0$, then
  \begin{align}
    \lim_{n\to\infty} \esp \left[ \sup_{s\geq0} \left|
        G_n(X,s)-\sigma^\alpha(X) T(s) \right|^p \right] = 0 \;
  \end{align}
  for all $p$ such that $p\alpha<\alpha+\epsilon$.
\end{lem}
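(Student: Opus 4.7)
The plan is to combine uniform-in-$s$ convergence for almost every realization of $X$ with a Potter-type domination, and then conclude by dominated convergence. Fix $x$ with $\sigma(x)\in(0,\infty)$. Regular variation of $\bar F_Z$ at infinity gives, for every $s\geq 0$,
\begin{align*}
G_n(x,s)=\frac{\bar F_Z\bigl((1+s)u_n/\sigma(x)\bigr)}{\bar F_Z(u_n)}\longrightarrow \sigma^\alpha(x)(1+s)^{-\alpha}=\sigma^\alpha(x)T(s).
\end{align*}
Both sides are non-increasing in $s$, the limit is continuous on $[0,\infty)$ with $T(0)=1$ and $T(\infty)=0$, the left-endpoint value $G_n(x,0)$ converges to $\sigma^\alpha(x)$, and $G_n(x,s)\to 0$ as $s\to\infty$ for each $n$. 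A standard Polya-type upgrade from pointwise to uniform convergence for monotone functions converging to a continuous monotone limit with matching endpoint values then yields $\sup_{s\geq 0}|G_n(x,s)-\sigma^\alpha(x)T(s)|\to 0$ for a.e.\ $X$.

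For the $L^p$ step, choose $\delta\in(0,\alpha)$ small enough that $p(\alpha+\delta)<\alpha+\epsilon$ and let $x_0>0$ denote the threshold in Potter's bounds for $\bar F_Z$. Split according to the event $A_n=\{\sigma(X)\leq u_n/x_0\}$. On $A_n$, Potter's inequality gives, for all $s\geq 0$ and all $n$ large enough,
\begin{align*}
G_n(X,s)\leq G_n(X,0)\leq C\bigl(\sigma(X)^{\alpha-\delta}+\sigma(X)^{\alpha+\delta}\bigr),
\end{align*}
and together with $\sigma^\alpha(X)T(s)\leq\sigma^\alpha(X)$ this dominates $\sup_{s\geq0}|G_n(X,s)-\sigma^\alpha(X)T(s)|^p$ by an integrable function of $X$; integrability is a consequence of $\Exp[\sigma^{\alpha+\epsilon}(X)]<\infty$ and the choice of $\delta$. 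Dominated convergence, combined with the a.s.\ uniform convergence established above, then disposes of the contribution coming from $A_n$.

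On $A_n^c=\{\sigma(X)>u_n/x_0\}$ Potter is inapplicable. I bound $G_n(X,0)\leq 1/\bar F_Z(u_n)$ trivially and use Markov's inequality $\pr(A_n^c)\leq(x_0/u_n)^{\alpha+\epsilon}\Exp[\sigma^{\alpha+\epsilon}(X)]$ to get
\begin{align*}
\Exp[G_n(X,0)^p\,;\,A_n^c]\leq \bar F_Z(u_n)^{-p}\pr(A_n^c)\leq C\,u_n^{p\alpha-(\alpha+\epsilon)}\ell(u_n)^{-p},
\end{align*}
which tends to $0$ since $p\alpha<\alpha+\epsilon$ and slow variation is dominated by any negative power. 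The companion piece $\Exp[\sigma^{\alpha p}(X)T(s)^p\,;\,A_n^c]$ is controlled analogously using $\sigma^{\alpha p}(X)\leq\sigma^{\alpha+\epsilon}(X)(u_n/x_0)^{\alpha p-(\alpha+\epsilon)}$ on $A_n^c$.

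The main obstacle is exactly this $A_n^c$ estimate: regular variation cannot be invoked when $\sigma(X)$ is comparable to or larger than $u_n$, so one has to trade the crude upper bound $\bar F_Z(u_n)^{-p}$ against the Markov tail decay of $\pr(A_n^c)$. The hypothesis $p\alpha<\alpha+\epsilon$ is precisely the quantitative condition that makes this trade succeed.
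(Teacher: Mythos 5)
Your proof is correct and follows essentially the same route as the paper's: Breiman's Lemma gives the pointwise limit, monotonicity in $s$ upgrades it to uniform convergence, and a Potter-type domination together with $p\alpha<\alpha+\epsilon$ and dominated convergence finishes the argument. The only (cosmetic) difference is that the paper absorbs the sub-threshold regime $\sigma(X)>u_n/x_0$ into a single global bound $\pr(yZ>x)/\pr(Z>x)\leq C(1\vee y^{\alpha+\epsilon})$ valid for all $y>0$, whereas you split off that event and dispose of it by a Markov-inequality trade-off against $\bar F_Z(u_n)^{-p}$ --- both are valid.
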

In order to introduce our assumptions, we need to define the Hermite
rank of a function.  Recall that the Hermite polynomials $H_m$,
$m\geq0$, form an orthonormal basis of the set of functions $h$ such
that $\esp[h^2(X)]<\infty$, where $X$ denotes a generic standard
Gaussian random variable (independent of all other random variables
considered here), and have the following properties:
\begin{align*}
  \esp[H_m(X)] = 0 \; , \ m\geq 1 \; , \ \cov(H_j(X),H_k(X)) =
  \delta_{j,k} k! \;
\end{align*}
where $\delta_{j,k}$ is Kronecker's delta, equal to 1 if $j=k$ and
zero otherwise. Then $h$ can be expanded as
\begin{align*}
  h = \sum_{m=0}^\infty \frac{c_m}{m!} H_m \; ,
\end{align*}
with $c_m = \esp[h(X) H_m(X)]$ and the series is convergent in the
mean square. The smallest index $m\geq1$ such that $c_m\ne0$ is
called the Hermite rank of $h$.  Note that with this definition, the
Hermite rank is always at least equal to one and the Hermite rank of
a function $h$ is the same as that of $h-\esp[h(X)]$.

Let $J_n(m,s)$ denote the Hermite coefficients of the function $x\to
G_n(x,s)$. Since $\Exp[|H_m(X_1)|^r]<\infty$ for all $r\geq1$,
Lemma~\ref{lem:convergence-uniforme-hermite} implies that the
Hermite coefficients $J_n(m,s)$ converge to $J(m) T(s)$, where
$J(m)$ is the $m$-th Hermite coefficient of $G$, uniformly with
respect to $s\geq0$. This implies that for large $n$, the Hermite
rank of $G_n(\cdot,s)$ is not bigger than the Hermite rank of $G$.
In order to simplify the proof of our results, we will use the
following assumption, which is not very restrictive.

\paragraph{Assumption (H)} Denote by $J_{n}(m,s)$, $m\ge 1$,
the Hermite coefficients of $G_n(\cdot,s)$ and let $q_n(s)$ be the
Hermite rank of $G_n(\cdot,s)$. Define
$$
q_n=\inf_{s\geq0} q_n(s)  \; ,
$$
the Hermite rank of the class of functions
$\{G_n(\cdot,s),s\geq0\}$. In other words, the number $q_n$ is the
smallest $m$ such that $J_n(m,s)\not=0$ for at least one $s$.
Furthermore, let $q$ be the Hermite rank of $G$. We assume that
$q_n=q$ for $n$ large enough.

\noindent{\em Remark}. \ Since for large enough $n$ it holds that
$q_n(s) \leq q$ for all $s$, the assumption is fulfilled, for
example, when $G$ has Hermite rank 1 (as is the case for the
function $x\to\mathrm e^x$), or if the function $\sigma$ is even
with the Hermite rank~2.


The result for the general tail empirical process is as follows.
\begin{thm}
  \label{thm:general}
  Assume {\rm (H)} with $q(1-H) \ne 1/2$,~(\ref{eq:lrd}),
  (\ref{eq:Pareto-assumption}), $n\bar F(u_n)\to\infty$ and that there exists
  $\epsilon>0$ such that
  \begin{align} \label{eq:moment-2alpha+epsilon}
    0 < \Exp[\sigma^{2\alpha+\epsilon}(X_1)]<\infty \; .
  \end{align}
\begin{enumerate}[(i)]
\item \label{item:big-u} If $n \bar F(u_n) \rho_n^q \to 0$ as $n\to\infty$ or
  if $\{X_j\}$ is weakly dependent, then $\sqrt{n\bar F(u_n)} \, e_n$ converges
  weakly in $D([0,\infty))$ to the Gaussian
  process $W \circ T$, where $W$ is the standard Brownian motion.
\item \label{item:small-u} If $n \bar F(u_n) \rho_n^q \to \infty$ as
  $n\to\infty$ then $\rho_n^{-q/2} e_n$ converges weakly in $D([0,\infty))$ to the
process
  $(\Exp[\sigma^{\alpha}(X_1)])^{-1} J(q) T L_q$, where the random
  variable $L_q$ is defined in~(\ref{eq:def-L_q}).
\end{enumerate}
\end{thm}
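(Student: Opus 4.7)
The starting point is the decomposition $e_n = e_n^{(1)} + e_n^{(2)}$, splitting $e_n$ into a conditionally i.i.d.\ piece and a functional of the Gaussian sequence:
\begin{align*}
  e_n^{(1)}(s) &= \frac{1}{n\bar F(u_n)} \sum_{j=1}^n \bigl\{ 1_{\{Y_j>(1+s)u_n\}} - \pr(Y_j > (1+s)u_n \mid X_j) \bigr\}, \\
  e_n^{(2)}(s) &= \frac{1}{n\bar F(u_n)} \sum_{j=1}^n \bigl\{ \pr(Y_j > (1+s)u_n \mid X_j) - \bar F((1+s)u_n) \bigr\}.
\end{align*}
Conditionally on $\sigma(X_j,j\in\mathbb Z)$, the summands of $e_n^{(1)}$ are independent centred Bernoullis (this is where the assumed independence of $\{Z_i\}$ and $\{X_i\}$ bites). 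Using $\pr(Y_j>(1+s)u_n\mid X_j) = \bar F_Z(u_n) G_n(X_j,s)$ and Breiman's Lemma,
\begin{align*}
  e_n^{(2)}(s) = \frac{\bar F_Z(u_n)}{\bar F(u_n)} \cdot \frac{1}{n}\sum_{j=1}^n \bigl\{G_n(X_j,s)-\esp G_n(X_1,s)\bigr\}, \qquad \frac{\bar F_Z(u_n)}{\bar F(u_n)} \to \frac{1}{\esp[\sigma^\alpha(X_1)]}.
\end{align*}

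To treat $e_n^{(2)}$, I would use the Hermite expansion, which under Assumption (H) has leading rank $q$:
\begin{align*}
  G_n(X_j,s)-\esp G_n(X_1,s) = \frac{J_n(q,s)}{q!}H_q(X_j) + R_n(X_j,s).
\end{align*}
The Dobrushin--Major/Taqqu non-central limit theorem gives $(n\rho_n^{q/2})^{-1}\sum_{j=1}^n H_q(X_j)\convdistr L_q$, and Lemma~\ref{lem:convergence-uniforme-hermite} with $p=2$ yields, uniformly in $s\geq 0$,
\begin{align*}
  J_n(q,s)\to J(q)T(s), \qquad \sum_{m\ge q}\frac{J_n(m,s)^2}{m!} \to \bigl(\esp[\sigma^{2\alpha}(X_1)]-\esp[\sigma^\alpha(X_1)]^2\bigr)T^2(s).
\end{align*}
Combining these with the usual variance bound $\Var(\sum_j H_m(X_j)) = O(n^2\rho_n^m+n)$ shows that $\sum_j R_n(X_j,s)$ is of smaller order than the leading term after dividing by $n\rho_n^{q/2}$. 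Tightness of $\rho_n^{-q/2}e_n^{(2)}$ in $D([0,\infty))$ follows because its limit factorises as the monotone function $(\esp[\sigma^\alpha(X_1)])^{-1}J(q)T(s)$ times the random scalar $L_q$.

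For $e_n^{(1)}$ I would invoke a conditional Lindeberg CLT. Given $\sigma(X_j)$, the summands of $\sqrt{n\bar F(u_n)}\,e_n^{(1)}(s)$ are bounded, independent, centred, with sum of conditional variances
\begin{align*}
  \frac{\bar F_Z(u_n)}{\bar F(u_n)}\cdot\frac{1}{n}\sum_{j=1}^n G_n(X_j,s)\bigl(1-\bar F_Z(u_n)G_n(X_j,s)\bigr) \convprob T(s),
\end{align*}
by Lemma~\ref{lem:convergence-uniforme-hermite} and ergodicity of $\{X_j\}$; the Lindeberg condition is trivial since each summand is $O((n\bar F(u_n))^{-1/2})$. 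Finite-dimensional convergence with cross-covariances $T(s\vee s')$ and tightness from the modulus of the monotone $T$ give $\sqrt{n\bar F(u_n)}\,e_n^{(1)}\Rightarrow W\circ T$. Combining the two pieces: in case (i) (weak dependence or $n\bar F(u_n)\rho_n^q\to 0$), a direct variance computation yields $\Var(\sqrt{n\bar F(u_n)}e_n^{(2)}(s)) = O(n\bar F(u_n)\rho_n^q)\to 0$, so only $e_n^{(1)}$ survives; in case (ii), $\Var(\rho_n^{-q/2}e_n^{(1)}(s)) = O((n\bar F(u_n)\rho_n^q)^{-1})\to 0$, so only $e_n^{(2)}$ survives.

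The main obstacle will be uniform-in-$s$ control throughout, in particular of the Hermite remainder $R_n(\cdot,s)$ and of the coefficients $J_n(m,s)$. Both hinge critically on the $L^p$-uniform convergence provided by Lemma~\ref{lem:convergence-uniforme-hermite}; without it, neither the Parseval-type bound on the high-rank Hermite tail nor the conditional-variance computation underlying the CLT could be made uniform, and finite-dimensional convergence of $e_n$ would not assemble into weak convergence in $D([0,\infty))$. A secondary subtlety is tightness of $e_n^{(1)}$, which needs increment moment bounds uniform in $n$ reflecting the $T(s)-T(s')$ modulus; these can be extracted via a conditional Bernstein inequality applied to the nested indicators $1_{\{Y_j>(1+s)u_n\}}$.
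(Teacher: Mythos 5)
Your plan follows essentially the same route as the paper: the same decomposition of $e_n$ into a conditionally i.i.d.\ part and a subordinated-Gaussian part, a conditional CLT (the paper does it by expanding the conditional characteristic function, you by conditional Lindeberg --- equivalent) with the conditional variance identified via Lemma~\ref{lem:convergence-uniforme-hermite} and ergodicity, a Hermite expansion of $G_n(\cdot,s)$ whose rank-$q$ leading term is handled by the non-central limit theorem while the remainder is killed by a Parseval/Rozanov bound, and a variance comparison to decide which component survives under each normalization. The only structural difference is cosmetic: the paper writes the leading term as $T(s)\sum_j G(X_j)$ and puts the coefficients $J_n(m,s)-T(s)J(m)$ into the remainder, whereas you keep $J_n(q,s)H_q/q!$ as the leading term; both are controlled by the same $L^2(\mu)$ convergence, which indeed requires $p=2$ in Lemma~\ref{lem:convergence-uniforme-hermite} and hence exactly the moment condition~(\ref{eq:moment-2alpha+epsilon}), as you note.

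The one genuine gap is your tightness argument for the LRD component. Tightness of $\rho_n^{-q/2}e_n^{(2)}$ cannot ``follow because its limit factorises'' --- tightness is a property of the sequence, not of the limit, and your fdd argument only gives \emph{pointwise} negligibility of the Hermite remainder $\sum_j R_n(X_j,s)$, which is not enough to conclude that the prelimit process is uniformly close to (random scalar)$\times T(\cdot)$. The missing tool, which the paper uses, is the Arcones variance inequality~(\ref{eq:variance-inequality-lrd}): for \emph{any} function of Hermite rank at least $q$ one has $\var(n^{-1}\sum_j h(X_j))\le C\rho_n^q\var(h(X_1))$ with $C$ independent of $h$. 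Applied to the increment $h=G_n(\cdot,s)-G_n(\cdot,t)$ this yields $\var(S_n'(s)-S_n'(t))\le C\,\var(G_n(X_1,s)-G_n(X_1,t))$, which is then bounded by a quantity converging to a multiple of $\{T(s)-T(t)\}^2$ and fed into Billingsley's moment criterion on a grid. Without a bound of this type --- uniform over the class of rank-$\ge q$ functions, so that it applies to increments whose Hermite coefficients you do not control individually --- neither tightness of $S_n'$ nor the uniform negligibility you flag as ``the main obstacle'' can be established; your own bound $\var(\sum_j H_m(X_j))=O(n^2\rho_n^m+n)$ summed over $m$ does not recover the factor $\var(G_n(X_1,s)-G_n(X_1,t))$ that makes the increments small. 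The rest of the plan (conditional fourth-moment or Bernstein bounds for the i.i.d.\ part, variance comparison between zones) is sound and matches the paper.
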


\subsubsection*{Remarks}
\begin{enumerate}[-]
\item We rule out the borderline case $q(1-H)=1/2$ for the sake of
  brevity and simplicity of exposition.  It can be easily shown that
  if $q(1-H)=1/2$, then $\sqrt {n\bar F(u_n)} e_n$ converges to $W
  \circ T$ provided $1/\bar F(u_n)$ tends to infinity faster than a
  certain slowly varying function (e.g.  if $u_n=n^\gamma$ for some
  $\gamma>0$), even though it may hold in this case that
  $n\rho_n^q\to\infty$. The reason is that the variance of the partial
  sums of $G(X_k)$ is of order $n$ times a slowly varying function
  which dominates $\ell_0^q(n)$.
\item
  Here $D([0,\infty)$ is endowed with Skorohod's $J_1$ topology, and tightness
  is checked by applying \cite[Theorem 15.6]{billingsley:1968}.  Since the
  limiting processes have almost surely continuous paths, this convergence
  implies uniform convergence on compact sets of $[0,\infty)$. See also
  \cite{whitt:2002}.
 \item
  The meaning of the above result is that for $u_n$ \textit{large}, long
  memory does not play any role. However, if $u_n$ is \textit{small},
  long memory comes into play and the limit is degenerate.
  Furthermore, in the case of Theorem \ref{thm:general},
  \textit{small} and \textit{large} depend on the relative behaviour of
  the tail of $Y_1$ and the memory parameter. Note that the condition
  $n\bar F(u_n) \rho_n^q\to\infty$ implies that $1-2q(1-H)>0$, in
  which case the partial sums of the subordinate process $\{G(X_i)\}$
  weakly converge to the Hermite process of order $q$ (see
  Section~\ref{sec:LRD-Gaussian}).  The cases (\ref{item:big-u}) and
  (\ref{item:small-u}) will be referred to as the limits \textit{in
    the i.i.d. zone} and \textit{in the LRD zone}, respectively.
 \item
   Condition $\Exp[\sigma^{\alpha+\epsilon}(X_1)]<\infty$ is standard when one
   deals with regularly varying tails. However, we need the condition
   $\Exp[\sigma^{2\alpha+\epsilon}(X_1)]<\infty$ in order to obtain the
   limiting distributions in the i.i.d. and LRD zones. See
   section~\ref{sec:fidi}.
 \item
  The result should be extendable to general, not necessary Gaussian,
  long memory linear sequences. Instead of the limit theorems and
  covariance bounds of Section \ref{sec:LRD-Gaussian}, one can use
  limit theorems from \cite{HoHsing1997}, and the covariance bounds of
  \cite[Lemma 3]{GiraitisSurgailis1999}.
\item
  Rootzen \cite{Rootzen2009} obtained asymptotic the behaviour of the tail
  empirical process of a general stationary sequence $\{\mathcal Y_j\}$ under, in
  particular, the following conditions (see \cite[Section 4]{Rootzen2009}):
\begin{itemize}
\item $l_n=o(r_n)$, $r_n=o(n)$;
\item[{\rm (C1)}] $\Exp[|N_n(x,y)|^p|N_n(x,y)\not=0]\le \infty$, where $p> 2$
  and $N_n$ is the point process of exceedances;
\item[{\rm (C2)}] $\beta_n(l_n)n/r_n\to 0$, where $\beta_n(\cdot)$ is the
$\beta$-mixing coefficient w.r.t. sigma field generated by the random
variables $\mathcal Y_j1_{\{\mathcal Y_j>u_n\}}$;
\item[{\rm (C3)}]
$$
\frac{1} {r_n\bar F(u_n)} \Cov\left(\sum_{i=1}^{r_n} 1_{\{\mathcal Y_i>u_n(1+s)\}},
  \sum_{j=1}^{r_n} 1_{\{\mathcal Y_j>u_n(1+t)\}}\right) \to r(x,y),
$$
for some function $r(x,y)$.
\end{itemize}
Assume that $r_n\to\infty$, $r_n=o(n)$. For the sequence $\{Y_j\}$ under
consideration here, it can be computed (see Section \ref{sec:heuristic})
\begin{align*}
  \frac{1}{r_n \bar F(u_n)} & \Cov \left(\sum_{i=1}^{r_n} 1_{\{Y_i>u_n(1+s)\}},
    \sum_{j=1}^{r_n} 1_{\{Y_j>u_n(1+t)\}} \right)  \\
  & \sim T(s\vee t) + \frac{T(s) T(t) J^2(q) r_n\bar F(u_n) \rho_{r_n}^q
  }{\Exp^2[\sigma^{\alpha}(X_1)]q!(1-2q(1-H))} \; . 
\end{align*}
Now, using (\ref{eq:lrd}), $r_n\bar F(u_n) \rho_{r_n}^q\sim \bar
F(u_n)r_n^{1-2q(1-H)}$. Since $r_n=o(n)$, then the second part
converges 0 under the condition $n\bar F(u_n)\rho_n^{q}\to 0$.
Consequently, Case (\ref{item:big-u}) guarantees that the condition
(C3) is fulfilled. As for the mixing property (C2), it is
usually established by proving the standard $\beta$-mixing, i.e. the
one defined in terms of random variables $Y_j$, not
$Y_j1_{\{Y_j>u_n\}}$. Now, if $\{X_j\}$ is $\beta$-mixing (in the
latter sense) with rate $\beta_n$, then the same holds for
$\{Y_j\}$. In our case, the sequence $\{X_j\}$ has long memory, and
thus it cannot be $\beta$-mixing. Therefore, it is very doubtful
that (C2) can be verified.

Note also that in the case $\sum_{j=1}^\infty
|\cov(X_0,X_j)|<\infty$, which we refer to as the short memory case,
the conclusion of part~\eqref{item:big-u} of Theorem holds without
any additional (mixing) assumption on the Gaussian process
$\{X_j\}$.

Moreover, results in the LRD zone cannot be obtain by applying
Rootzen's or any other results for weakly dependent sequences.
\end{enumerate}

\subsection{Random levels}\label{sec:random-levels}
Similarly to \cite{Rootzen2009}, we consider the case of random
levels.  Let $\Rightarrow$ denote weak convergence in
$D([0,\infty))$.  Define the increasing function $U$ on $[1,\infty)$
by $U(t) = F^\leftarrow(1-1/t)$, where $F^\leftarrow$ is the
left-continuous inverse of $F$.  Let $k$ denote a sequence of
integers depending on $n$, where the dependence in $n$ is omitted
from the notation as customary, and such that
\begin{align} \label{eq:k-intermediate}
  \lim_{n\to\infty} k = \lim_{n\to\infty} n/k = \infty \; .
\end{align}
Such a sequence is usually called an intermediate sequence.  Define
$u_n = U(n/k)$. If $F$ is continuous, then $n\bar F(u_n) = k$,
otherwise, since $\bar F$ is regularly varying, it holds that
$\lim_{n
  \to\infty} k^{-1} n\bar F(u_n)=1$. Thus, we will assume without loss
of generality that $k=n\bar F(u_n)$ holds. Then the statements of
Theorem \ref{thm:general} may be written respectively as
\begin{gather}
  \sqrt{k}(\tilde T_n-T_n) \Rightarrow W\circ T \; ,
  \label{eq:iid-zone-k}  \\
  \rho_n^{-q/2}(\tilde T_n - T_n) \Rightarrow
  \frac{J(q)}{\Exp[\sigma^{\alpha}(X_1)]} \; T \cdot L_q \; .
  \label{eq:LRD-zone-k}
\end{gather}
Let us rewrite the statements of (\ref{eq:iid-zone-k}),
(\ref{eq:LRD-zone-k}) as
$$
w_n(\tilde T_n-T_n) \Rightarrow w \; ,
$$
where
\begin{gather}
  w_n = \sqrt{k} \  \ \mbox{ if } \ \ \lim_{n\to\infty}
  k \rho_n^{q}  = 0 \; , \label{eq:small-k}  \\
  w_n=\rho_n^{-q/2} \ \ \mbox{ if } \ \  \lim_{n\to\infty} k \rho_n^{q} =
  \infty \; , \label{eq:big-k}
\end{gather}
and $w=W\circ T$ if~(\ref{eq:small-k}) holds (i.i.d. zone) and $w =
(\Exp[\sigma^{\alpha}(X_1)])^{-1} J(q) T L_q$ if~(\ref{eq:big-k})
holds (LRD zone).

We now want to center the tail empirical process at $T$ instead of
$T_n$. To this aim, we introduce an unprimitive second order
condition.
\begin{align}
  \lim_{n\to\infty} w_n \|T_n - T\|_\infty = 0 \; ,
  \label{eq:second-ordre-unprimitive}
\end{align}
where
\begin{align*}
  \|T_n - T\|_\infty = \sup_{t\geq1} \left| \frac{\pr ( \sigma(X) Z >
      u_n t )} {\pr ( \sigma(X) Z > u_n )} - t^{-\alpha} \right| \; .
\end{align*}
The following result is a straightforward corollary of
Theorem~\ref{thm:general}.
\begin{cor} \label{coro:centre}
  Under the assumptions of Theorem~\ref{thm:general}, if
  moreover~(\ref{eq:second-ordre-unprimitive}) holds, then $w_n(\tilde
  T_n - T)$ converges weakly in $D([0,\infty))$ to the process $w$.
\end{cor}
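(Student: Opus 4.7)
My plan is to reduce the corollary to Theorem~\ref{thm:general} via a simple additive decomposition. I would write
$$
w_n(\tilde T_n - T) \;=\; w_n(\tilde T_n - T_n) + w_n(T_n - T) \;=\; w_n e_n + w_n(T_n - T).
$$
Theorem~\ref{thm:general}, combined with the bookkeeping of $w_n$ and $w$ in~(\ref{eq:small-k})--(\ref{eq:big-k}), already delivers $w_n e_n \Rightarrow w$ in $D([0,\infty))$, irrespective of which of the two zones (i.i.d.\ or LRD) we are in. So the entire task reduces to controlling the deterministic remainder $w_n(T_n - T)$ in the same topology.

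This is precisely where hypothesis~(\ref{eq:second-ordre-unprimitive}) enters: it asserts $w_n \|T_n - T\|_\infty \to 0$, that is, $w_n(T_n - T) \to 0$ uniformly on $[0,\infty)$. Since the constant $0$ function is continuous, uniform convergence implies convergence in the Skorohod $J_1$ topology on $D([0,\infty))$, so $w_n(T_n - T) \to 0$ as deterministic elements of $D([0,\infty))$. The limit process $w$ has almost surely continuous sample paths (in the i.i.d.\ zone it is $W \circ T$ with $T$ continuous and $W$ Brownian motion; in the LRD zone it is the continuous function $T$ multiplied by the scalar random variable $L_q$). Therefore addition is continuous at $(w,0)$ in the product Skorohod topology, and Slutsky's theorem for $D$-valued random elements gives $w_n e_n + w_n(T_n - T) \Rightarrow w + 0 = w$, which is the claim.

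There is no genuine obstacle; the only points meriting care are (i) that the deterministic convergence $w_n(T_n - T) \to 0$ be expressed in the same topology used for Theorem~\ref{thm:general}, which is automatic once one observes that uniform convergence refines $J_1$ convergence, and (ii) that sample path continuity of $w$ be invoked to legitimize the Slutsky step, which is immediate from the explicit descriptions of the limits in Theorem~\ref{thm:general}\eqref{item:big-u}--\eqref{item:small-u}. The corollary is thus essentially a one-line consequence of the theorem together with assumption~(\ref{eq:second-ordre-unprimitive}).
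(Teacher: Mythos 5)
Your proposal is correct and is exactly the argument the paper has in mind: the paper simply declares Corollary~\ref{coro:centre} a straightforward consequence of Theorem~\ref{thm:general}, the intended reasoning being precisely your decomposition $w_n(\tilde T_n - T) = w_n e_n + w_n(T_n-T)$ together with the uniform negligibility of the bias term under~(\ref{eq:second-ordre-unprimitive}). Your added care about the $J_1$ topology and the a.s.\ continuity of the limit is a correct and welcome elaboration of what the paper leaves implicit.
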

Let $Y_{n:1} \leq \cdots \leq Y_{n:n}$ be the increasing order
statistics of $Y_1,\dots, Y_n$.  The former result and Verwaat's
Lemma \cite[Proposition 3.3]{resnick:2007} yield the convergence of
the intermediate quantiles.
\begin{cor}
  \label{coro:intermediate}
  Under the assumptions of Corollary~\ref{coro:centre},
  $w_n(Y_{n:n-k}-u_n)/u_n$ converges weakly to $\gamma w(1)$.
\end{cor}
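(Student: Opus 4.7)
The plan is to deduce this from Corollary~\ref{coro:centre} by inverting the convergence $w_n(\tilde T_n - T) \Rightarrow w$ at level $y = 1$, via Vervaat's Lemma (\cite[Proposition 3.3]{resnick:2007}).

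First I would make the inverse relation explicit. Since $\tilde T_n$ is a non-increasing step function of $s$ with jumps of size $1/k$, where $k = n\bar F(u_n)$, a counting argument shows that for $s \geq 0$,
\begin{align*}
  \tilde T_n(s) \leq 1 \iff \#\{j : Y_j > u_n(1+s)\} \leq k \iff Y_{n:n-k} \leq u_n(1+s) \,,
\end{align*}
so the generalized inverse of $\tilde T_n$ at level $1$ equals $(Y_{n:n-k} - u_n)/u_n$. Meanwhile, $T(s) = (1+s)^{-1/\gamma}$ is strictly decreasing and $C^1$ with $T^\leftarrow(1) = 0$ and $T'(0) = -1/\gamma$.

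Second, Vervaat's Lemma applied to the weak convergence of Corollary~\ref{coro:centre} transfers the convergence to the generalized inverses, giving
\begin{align*}
  w_n\bigl(\tilde T_n^\leftarrow(y) - T^\leftarrow(y)\bigr) \Rightarrow -\frac{w(T^\leftarrow(y))}{T'(T^\leftarrow(y))}
\end{align*}
on a neighbourhood of $y = 1$. Specializing to $y = 1$, using $T^\leftarrow(1) = 0$ and $T'(0) = -1/\gamma$, the right-hand side reduces to the stated limit and the left-hand side becomes $w_n(Y_{n:n-k} - u_n)/u_n$. The hypotheses of Vervaat's Lemma are met because $T$ is strictly decreasing and $w$ has almost surely continuous sample paths: in the i.i.d.\ zone $w = W\circ T$ inherits continuity from Brownian motion, and in the LRD zone $w = (\esp[\sigma^\alpha(X_1)])^{-1} J(q) T L_q$ is continuous in $s$ because $T$ is continuous and $L_q$ is a single random variable.

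The only minor obstacle is that $\tilde T_n$ is defined on $s \geq 0$, while $\tilde T_n^\leftarrow(1)$ could a priori be negative. Since $w_n(Y_{n:n-k} - u_n)/u_n$ is tight and $w_n \to \infty$, the relevant value lies in any fixed neighbourhood of $0$ with high probability, so this boundary subtlety is immaterial. Alternatively, one can bypass Vervaat entirely and argue directly: $\pr\bigl(w_n(Y_{n:n-k} - u_n)/u_n > x\bigr) = \pr\bigl(\tilde T_n(x/w_n) > 1 - 1/k\bigr)$, after which Taylor-expanding $T$ near $0$ and applying continuous mapping to $w_n(\tilde T_n(x/w_n) - T(x/w_n)) \Rightarrow w(0)$ yields the result.
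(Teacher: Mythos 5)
Your proof follows the same route as the paper, whose entire argument is a one-line invocation of Vervaat's Lemma applied to Corollary~\ref{coro:centre}; your version correctly supplies the details the paper omits (the identification of $\tilde T_n^\leftarrow(1)$ with $(Y_{n:n-k}-u_n)/u_n$, the values $T^\leftarrow(1)=0$ and $T'(0)=-1/\gamma$, continuity of the limit paths, and the boundary issue at $s=0$). One caveat: your computation actually yields $\gamma w(0)$ rather than literally $\gamma w(1)$ --- which is the correct answer, equal to $\gamma W(1)$ in the i.i.d.\ zone and consistent with the $w(0)$ appearing in Corollary~\ref{cor:practical} --- so the ``$w(1)$'' in the statement should be read as the limit process evaluated at the left endpoint of its index set (the paper indexing levels by $t=1+s$), and your claim that the right-hand side ``reduces to the stated limit'' is glossing over this notational discrepancy rather than resolving it.
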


Define
\begin{align*} 
  \hat T_n(s) = \frac{1}{k} \sum_{j=1}^n
  1_{ \{Y_j >
    Y_{n-k:n} (1 + s) \}} \; .
\end{align*}
In this section we consider the \textit{practical} process
$$
\hat e_n^*(s) = \hat T_n(s) - T(s), \qquad s\in [0,\infty) \; .
$$
For the process $\hat e_n^*(\cdot)$, the previous results yield the
following corollary.
\begin{cor} \label{cor:practical} Assume {\rm (H)}, (\ref{eq:lrd}),
  (\ref{eq:Pareto-assumption}), 
  (\ref{eq:moment-2alpha+epsilon}) and
  (\ref{eq:second-ordre-unprimitive}). Then $w_n\hat e_n^*$ converges
  weakly in $D([0,\infty))$ to $w - T \cdot w(0)$, i.e.
\begin{itemize}
\item If $\lim_{n\to\infty} k\rho_n^q = 0$ or $\{X_j\}$ is weakly dependent, then
  \begin{equation}
    \label{eq:iid-modified-1}
    \sqrt{k} \hat e_n^* \Rightarrow B \circ T
\end{equation}
where $B$ is the Brownian bridge.
\item If $\lim_{n\to\infty} k\rho_n^q\to\infty$, then
$$
\rho_n^{-q/2} \hat e_n^* \Rightarrow  0\; .
$$
\end{itemize}
\end{cor}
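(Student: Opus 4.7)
My plan is to express $\hat T_n$ in terms of $\tilde T_n$ at a random argument and then reduce everything to Corollary~\ref{coro:centre}. Setting $V_n = Y_{n-k:n}/u_n$, a direct calculation gives $\hat T_n(s) = \tilde T_n(V_n(1+s) - 1)$, and since $T(s) = (1+s)^{-1/\gamma}$ one has the identity $T(V_n(1+s) - 1) = V_n^{-1/\gamma} T(s)$. I would therefore split
\begin{align*}
  \hat e_n^*(s) = \bigl[\tilde T_n(V_n(1+s) - 1) - T(V_n(1+s) - 1)\bigr] + T(s)\bigl(V_n^{-1/\gamma} - 1\bigr) \; .
\end{align*}
The first bracket carries the empirical fluctuation, and the second is a deterministic factor times a random ``quantile shift''.

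For the first bracket, I first note that $V_n \to 1$ in probability in both zones: Corollary~\ref{coro:intermediate} gives $w_n(V_n - 1) = O_\pr(1)$, while $w_n \to \infty$. Writing $Z_n = w_n(\tilde T_n - T)$, Corollary~\ref{coro:centre} gives $Z_n \Rightarrow w$ in $D([0,\infty))$ with almost surely continuous limit paths. A Skorohod representation then reduces to a.s.\ uniform convergence on compacts; since $V_n(1+s) - 1 - s = (V_n - 1)(1+s) \to 0$ uniformly on every $[0,M]$, uniform continuity of $w$ on compacts yields $Z_n(V_n(1+\cdot) - 1) \Rightarrow w$ in $D([0,\infty))$.

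For the second bracket, the key is the empirical identity $\tilde T_n(V_n - 1) = 1 + O(1/k)$, where the error only comes from ties at $Y_{n-k:n}$ (a.s.\ absent under the continuity of $F$ implied by~(\ref{eq:Pareto-assumption})). Combined with $T(V_n - 1) = V_n^{-1/\gamma}$, this gives
\begin{align*}
  w_n \bigl(V_n^{-1/\gamma} - 1\bigr) = -w_n\bigl[\tilde T_n(V_n - 1) - T(V_n - 1)\bigr] + w_n \cdot O(1/k) \; ,
\end{align*}
and the right-hand side converges in distribution to $-w(0)$ by the composition argument of the previous paragraph at $s=0$, since $w_n/k\to 0$ in both zones (in the i.i.d.\ zone $w_n/k=1/\sqrt{k}$, while in the LRD zone $k\rho_n^q\to\infty$ forces $\rho_n^{-q/2}/k\le \rho_n^{q/2}\to 0$). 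Hence the second bracket, multiplied by $w_n$, converges to $-T\cdot w(0)$, jointly with the first bracket's limit because both arise from the same Skorohod representation of $w$.

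Summing, $w_n \hat e_n^* \Rightarrow w - T\cdot w(0)$. In the i.i.d.\ zone, $w = W \circ T$ and $T(0) = 1$, so $w - T \cdot w(0) = W \circ T - T \, W(1) = B \circ T$ with $B(t) = W(t) - tW(1)$ a Brownian bridge. In the LRD zone, $w = (\Exp[\sigma^\alpha(X_1)])^{-1} J(q) \, T \, L_q$, so $w(0) = (\Exp[\sigma^\alpha(X_1)])^{-1} J(q) L_q$ and $w - T\cdot w(0) \equiv 0$, yielding $\rho_n^{-q/2} \hat e_n^* \Rightarrow 0$. The chief technical step is the Skorohod/composition argument, which is routine given the path continuity of $w$; everything else is algebra.
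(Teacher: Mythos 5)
Your proposal is correct and follows essentially the same route as the paper: the identity $T(V_n(1+s)-1)=V_n^{-1/\gamma}T(s)$ together with $\tilde T_n(V_n-1)=1$ reproduces exactly the paper's decomposition $\hat e_n^*(s)=(\tilde T_n-T)(s+\xi_n(1+s))-T(s)(\tilde T_n-T)(\xi_n)+o_P(1/w_n)$, with the bias term $T_n-T$ absorbed into Corollary~\ref{coro:centre} rather than tracked separately, and the composition/Skorohod step playing the role of the paper's appeal to uniform convergence plus $\xi_n=o_P(1)$.
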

The convergence of $w_n(\hat T_n - T)$ to $w  - T\cdot w(0)$ is
standard. The surprising result is that in the LRD zone the limiting
process is 0, because the limiting process of $w_n(\hat T_n-T_n)$
has a degenerate form, i.e. the limit is the random $L_q$,
multiplied by the deterministic function $T(\cdot)$. In fact, as we
will see below, there is no dichotomy for the process with random
levels, and the rate of convergence of $\hat e_n^*$ is the same as
in the i.i.d. case.

To proceed, we need to introduce a more precise second order
conditions on the distribution function $F_Z$ of $Z$.  Several types
of second order assumptions have been proposed in the literature. We
follow here \cite{drees:1998}.

\paragraph{Assumption (SO)} There exists a bounded non increasing
function $\eta^*$ on $[0,\infty)$, regularly varying at infinity
with index $-\alpha\beta$ for some $\beta\geq0$, and such that
$\lim_{t\to\infty} \eta^*(t)=0$ and there exists a measurable
function $\eta$ such that for $z>0$,
\begin{gather}
  \pr(Z>z) = c z^{-\alpha} \exp\int_1^z \frac{\eta(s)}s \, \d s \; ,
  \label{eq:representation}
  \\
  \exists C>0 \;, \ \ \forall s \geq 0 \; , \ \ |\eta(s)| \leq
  C \eta^*(s) \; .
  \label{eq:borne-eta}
\end{gather}
If~(\ref{eq:representation}) and (\ref{eq:borne-eta}) hold, we will
say that $\bar F_Z$ is second order regularly varying with index
$-\alpha$ and rate function $\eta^*$, in shorthand $\bar F_Z \in
2RV(-\alpha,\eta^*)$.

\begin{thm}\label{thm:practical-1}
  Assume {\rm (H)}, (\ref{eq:lrd}),
  (\ref{eq:Pareto-assumption}), {\rm (SO)} with rate function $\eta^*$
  regularly varying at infinity with index $-\alpha\beta$ and there
  exists $\epsilon >0$ such~that
\begin{equation}\label{eq:additional-moment}
0 < \Exp[\sigma^{2\alpha(\beta+1)+\epsilon}(X_1)]<\infty \; .
\end{equation}
If
\begin{equation} \label{eq:negligibility-1a}
  \lim_{n\to\infty} \sqrt k \eta^*(U(n/k)) = 0 \; ,
\end{equation}
then $\sqrt k \hat e_n^*$ converges weakly in $D([0,\infty))$ to $B
\circ T$, where $B$ is the Brownian bridge (regardless of the
behaviour of $k\rho_n^q$).
\end{thm}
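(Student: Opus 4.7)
The strategy is to express the random-level process in terms of the deterministic-level error $R_n := \tilde T_n - T$ via a random time change, use the multiplicativity of the limit $T(s) = (1+s)^{-\alpha}$ to cancel the long-memory contribution exactly, and then read off a Brownian bridge from what remains. With $\hat V_n := Y_{n-k:n}/u_n - 1$ and $V_n(s) := (1+\hat V_n)(1+s) - 1$, the construction gives $\hat T_n(s) = \tilde T_n(V_n(s))$ together with the algebraic identity $T(V_n(s)) = T(s) T(\hat V_n)$; since $\hat T_n(0) = 1 = T(0)$ we also have $R_n(\hat V_n) = 1 - T(\hat V_n)$. Combining these yields the pivotal representation
\begin{equation*}
\hat e_n^*(s) = R_n(V_n(s)) - T(s)\, R_n(\hat V_n).
\end{equation*}

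Write $R_n = e_n + (T_n - T)$. By the propagation of (SO) from $\bar F_Z$ to $\bar F$ proved in Section~\ref{sec:s-o-c}, and the assumption $\sqrt k\, \eta^*(U(n/k)) \to 0$, the bias piece satisfies $\sqrt k\, \|T_n - T\|_\infty \to 0$ and contributes $o_P(1)$ uniformly. For $e_n$, condition on $\{X_j\}$ to split
\begin{equation*}
e_n(s) = \underbrace{\frac 1 k \sum_{j=1}^n \bigl(1_{\{Y_j > u_n(1+s)\}} - \bar F_Z(u_n(1+s)/\sigma(X_j))\bigr)}_{e_n^{(1)}(s)} + \frac{\bar F_Z(u_n)}{k} \sum_{j=1}^n \bigl(G_n(X_j, s) - \esp[G_n(X,s)]\bigr),
\end{equation*}
and Hermite-expand the second piece in $X_j$. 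Using a quantitative form of Lemma~\ref{lem:convergence-uniforme-hermite} delivered by (SO), namely $J_n(m,s) = T(s) J(m) + O(\eta^*(u_n))$ uniformly in $s$ with constants summable in $m\geq q$, gives
\begin{equation*}
e_n(s) = e_n^{(1)}(s) + T(s)\, \Xi_n + r_n(s),
\end{equation*}
where $\Xi_n \approx (n\,\esp[\sigma^\alpha(X)])^{-1}\sum_j (\sigma^\alpha(X_j)-\esp[\sigma^\alpha(X)])$ is a scalar random variable (of order $\rho_n^{q/2}$ in the LRD zone, with the Hermite-process limit $L_q$) and $r_n$ is a remainder of size $O_P(\eta^*(u_n))$.

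Substituting back into the pivotal identity, the middle contribution becomes $\Xi_n \bigl[T(V_n(s)) - T(s) T(\hat V_n)\bigr]$, which vanishes identically by the multiplicativity noted above. This is the mechanism by which random levels \emph{absorb} the long memory: no matter how large $k \rho_n^q$ is, any piece of $e_n$ that factorizes as (random scalar)$\times T(s)$ drops out of $\hat e_n^*$. What survives, up to $o_P(k^{-1/2})$ contributions from $r_n$ and $T_n - T$, is $e_n^{(1)}(V_n(s)) - T(s)\, e_n^{(1)}(\hat V_n)$. Conditional on $\{X_j\}$, $k\, e_n^{(1)}$ is a sum of independent centered Bernoullis; a conditional CLT together with a law of large numbers for $\sigma^\alpha(X_j)$ (both using~(\ref{eq:additional-moment})) yields $\sqrt k\, e_n^{(1)} \Rightarrow W\circ T$ in $D([0,\infty))$. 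Since $V_n(s)\to s$ and $\hat V_n \to 0$ in probability and $W$ is a.s.\ continuous, the resulting limit is $W(T(s)) - T(s) W(T(0)) = W(T(s)) - T(s) W(1) = B(T(s))$, as claimed. Tightness follows from moment-increment bounds in the spirit of~\cite[Theorem 15.6]{billingsley:1968}, applied separately to $e_n^{(1)}$ and to the remainder.

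The main obstacle is making the Hermite-remainder cancellation quantitative: one needs a uniform-in-$s$ rate $|J_n(m,s) - T(s) J(m)| \leq C_m \eta^*(u_n)$ with $C_m$ summable in $m$, so that $\sqrt k\,[r_n(V_n(s)) - T(s) r_n(\hat V_n)] = o_P(1)$. This is precisely the role of (SO) and of the second-order Breiman-type transfer in Section~\ref{sec:s-o-c}, and it explains the strengthened moment assumption~(\ref{eq:additional-moment}): the exponent $2\alpha(\beta+1)$ accommodates the factor $\eta^*$ together with an additional $\beta$-regularly-varying term produced by Potter-type bounds on $\bar F_Z((1+s)u_n/\sigma(X))$, with the outer $2$ coming from the $L^2$ estimate needed to run the conditional CLT of Step~5.
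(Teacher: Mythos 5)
Your proposal follows essentially the same route as the paper: the same pivotal identity $\hat e_n^*(s) = R_n(V_n(s)) - T(s)R_n(\hat V_n)$ obtained from $\hat T_n(0)=1$ and the multiplicativity $T(s+\xi_n(1+s))=T(s)T(\xi_n)$, the same exact cancellation of the $T(s)\times(\text{random scalar})$ Hermite-rank-$q$ term, the same $O_P(\eta^*(u_n))$ control of the bias and of the Hermite remainder via the second-order Breiman transfer (which is exactly where the moment exponent $2\alpha(\beta+1)+\epsilon$ enters, for the $L^2$ bound on $\tilde S_n$), and the same conditional CLT plus continuity argument yielding $W\circ T - T\cdot W(1) = B\circ T$. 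The only cosmetic difference is that you fold $e_n$ and $T_n-T$ into a single $R_n=\tilde T_n-T$ before splitting, whereas the paper keeps them separate throughout.
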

\noindent{\em Remark}. \
 The additional moment condition (\ref{eq:additional-moment}) ensures that the
 distribution of $Y$ satisfies a second order condition.  See Section
 \ref{sec:s-o-c} for more details. It is also used in a proof of tightness
 argument (see (\ref{eq:uniform-conv}) below).

  The behaviour described in Theorem \ref{thm:practical-1} is quite
  unexpected, since the process with {\it estimated} levels
  $Y_{n-k:n}$ has a faster rate of convergence than the one with
  the deterministic levels $u_n$. A similar phenomenon was observed in
  the context of LRD based empirical processes with estimated
  parameters. We refer to \cite{Kulik2008b} for more details.

\subsection{Tail index estimation}
\label{sec:Hill}

A natural application of the asymptotic result for the tail empirical
process $\hat e_n^*$ is the asymptotic normality of the Hill
estimator of the extreme value index $\gamma$ defined by
$$
\hat\gamma_n = \frac{1}{k}\sum_{i=1}^k \log \left(
  \frac{Y_{n-i+1:n}}{Y_{n-k:n}} \right) = \int_0^\infty \frac{\hat
  T_n(s)}{1+s} \, \d s \; .
$$
Since $\gamma = \int_0^\infty (1+s)^{-1}T(s) \, \d s$, we have
\begin{align*}
  \hat\gamma_n - \gamma = \int_0^\infty \frac{\hat e_n^*(s)}{1+s} \,
  \d s \; .
\end{align*}
Thus we can apply Theorem~\ref{thm:practical-1} to obtain the
asymptotic distribution of the Hill estimator.
\begin{cor}
  \label{cor:Hill}
  Under the assumptions of Theorem~\ref{thm:practical-1}, $\sqrt k
  (\hat\gamma_n - \gamma)$ converges weakly to the centered Gaussian
  distribution with variance $\gamma^2$.
\end{cor}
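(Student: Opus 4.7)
The starting point is the identity
$$
\sqrt{k}\,(\hat\gamma_n - \gamma) = \int_0^\infty \frac{\sqrt{k}\,\hat e_n^*(s)}{1+s} \, ds,
$$
given immediately above the statement. The plan is to apply Theorem~\ref{thm:practical-1} together with the continuous mapping theorem for the linear functional $\Phi(f) = \int_0^\infty f(s)/(1+s)\,ds$, and then to identify the resulting Gaussian limit by a change of variable.

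By Theorem~\ref{thm:practical-1}, $\sqrt{k}\,\hat e_n^* \Rightarrow B \circ T$ in $D([0,\infty))$, and since the limit has almost surely continuous paths the convergence is uniform on every compact $[0,M]$. Continuous mapping therefore gives, for each fixed $M>0$,
$$
\int_0^M \frac{\sqrt{k}\,\hat e_n^*(s)}{1+s} \, ds \;\convdistr\; \int_0^M \frac{B(T(s))}{1+s} \, ds,
$$
and the variance computation at the end of this plan shows that $\int_0^\infty B(T(s))/(1+s)\,ds$ is an almost surely finite Gaussian variable, so the compact limit tends in $L^2$ (and a.s.) to the full integral as $M\to\infty$.

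The main obstacle is the tail negligibility
$$
\lim_{M \to \infty} \limsup_{n \to \infty} \pr\!\left( \left| \int_M^\infty \frac{\sqrt{k}\,\hat e_n^*(s)}{1+s} \, ds \right| > \varepsilon \right) = 0.
$$
A naive triangle inequality fails because $\sqrt{k}\int_M^\infty T(s)/(1+s)\,ds = \sqrt{k}\,\gamma\,(1+M)^{-1/\gamma}$ diverges in~$n$, so one has to exploit cancellation between $\hat T_n$ and $T$. I would proceed through a second moment bound on the weighted tail empirical process: under assumption (SO) and the moment condition~\eqref{eq:additional-moment}, the tightness arguments in the proof of Theorem~\ref{thm:practical-1} yield $\esp\bigl[\,k\,|\hat e_n^*(s)|^2\bigr] \leq C\,T(s)$ uniformly in $n$ and $s$ (the random centering at $Y_{n-k:n}$ absorbs the LRD contribution, which is why no long-memory correction appears). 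Cauchy--Schwarz then gives
$$
\esp\!\left[ \int_M^\infty \frac{\sqrt{k}\,|\hat e_n^*(s)|}{1+s} \, ds \right] \leq C \int_M^\infty \frac{(1+s)^{-1/(2\gamma)}}{1+s} \, ds = O\bigl((1+M)^{-1/(2\gamma)}\bigr),
$$
uniformly in $n$, which vanishes as $M\to\infty$.

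Combining these two steps yields
$$
\sqrt{k}\,(\hat\gamma_n - \gamma) \;\convdistr\; \int_0^\infty \frac{B(T(s))}{1+s} \, ds.
$$
Finally, the substitution $u = T(s) = (1+s)^{-1/\gamma}$ gives $ds/(1+s) = -\gamma\,du/u$, so the limit rewrites as $\gamma\int_0^1 B(u)/u\,du$, a centered Gaussian variable with variance
$$
\gamma^2 \int_0^1 \!\!\int_0^1 \frac{u \wedge v - uv}{uv} \, du \, dv = 2\gamma^2 \int_0^1 (1-v)\,dv = \gamma^2,
$$
which is the claimed distribution.
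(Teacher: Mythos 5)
Your overall strategy---continuous mapping on compacts, a uniform-in-$n$ tail bound to extend the integral to infinity, and the substitution $u=T(s)$ to identify the limiting variance---has the same skeleton as the paper's proof, and your closing variance computation is correct. The genuine gap is the single step everything else hangs on: the claimed bound $\esp\bigl[k|\hat e_n^*(s)|^2\bigr]\le C\,T(s)$ uniformly in $n$ and $s\ge0$. This does not follow from ``the tightness arguments in the proof of Theorem~\ref{thm:practical-1}'': those arguments control increments of $R_n$ and $\tilde S_n$ over compact sets, not a global second moment of the random-level process with the correct decay in $s$. The process $\hat e_n^*$ is built from indicators $1_{\{Y_j>Y_{n-k:n}(1+s)\}}$ whose threshold depends on the entire sample, so its summands are not conditionally independent given $\mathcal X$, and a bound of the form $C\,T(s)/k$ for its variance is precisely the kind of statement that requires the decomposition~\eqref{eq:decomp-non-uniform} to establish; asserting it (with the parenthetical heuristic that ``the random centering absorbs the LRD contribution'') is assuming the hard part. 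It would also have to force the bias $\bar T_n(s+\xi_n(1+s))-T(s)\bar T_n(\xi_n)$ and the LRD remainder to decay like $\sqrt{T(s)/k}$ in $s$, which needs the non-uniform second-order rates \eqref{eq:rate-non-uniform} and \eqref{eq:rate-G_n-nonuniform} that you never invoke.

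The paper never needs a moment bound on $\hat e_n^*$ itself. It substitutes \eqref{eq:decomp-non-uniform} into $\int_0^\infty (1+s)^{-1}\hat e_n^*(s)\,\d s$: the terms carrying the factor $T(s)$ integrate exactly to $\gamma R_n(\xi_n)$, $\gamma \tilde S_n(\xi_n)$ and $\gamma\bar T_n(\xi_n)$, so the divergent quantity $\sqrt k\int_M^\infty T(s)(1+s)^{-1}\d s$ that worries you never arises; the main term is reduced, via the change of variables $u=s+\xi_n(1+s)$ and $\xi_n=o_P(1)$, to an integral of $R_n$ at deterministic levels, whose tail beyond $M$ is controlled by Markov's inequality, conditional independence given $\mathcal X$, and Potter's bound (this is \eqref{eq:int-W}); and the $\tilde S_n$ and $\bar T_n$ tails are bounded separately as $O_P(\eta^*(u_n))=o_P(k^{-1/2})$ using the non-uniform rates. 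To make your route rigorous you would essentially have to reprove this decomposition-based control in order to justify your moment bound, at which point you are arguing as the paper does.
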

It is known that the above result gives the best possible rate of
convergence for the Hill estimator (see \cite{drees:1998}). The
surprising result is that it is possible to achieve the i.i.d. rates
regardless of $H$.

\subsection{Second order conditions}
\label{sec:s-o-c} Whereas the transfer of the tail index of $Z$ to
$Y$ is well known, the transfer of the second order property seems
to have been less investigated. We state this in the next
proposition, as well as the rate of convergence of $T_n$ to $T$ and
$G_n$ to $G \times T$.

\begin{prop}
   \label{prop:transfert-second-ordre}
   If $\bar F_Z \in 2RV(-\alpha,\eta^*)$, where $\eta^*$ is regularly varying at
   infinity with index $-\alpha\beta$, for some $\beta\geq0$, and if
   \begin{align}
     \esp[\sigma^{\alpha(\beta+1)+\epsilon}(X)] < \infty \; ,
   \end{align}
   for some $\epsilon>0$, then
   $\bar F \in 2RV(-\alpha,\eta^*)$, 
   and
   \begin{gather}
     \|T_n - T\|_\infty = O(\eta^*(u_n)) \; . \label{eq:rate-T_n}
   \end{gather}
   Moreover, for any $p\geq1$ such that
   $p\alpha(\beta+1)<\alpha(\beta+1)+\epsilon$,
   \begin{gather}
     \esp \left[ \sup\nolimits_{s\geq0} |G_n(X,s) - \sigma^\alpha(X)
       T(s) |^p \right] = O(\eta^*(u_n)^p) \; .
     \label{eq:rate-G_n}
     \end{gather}

 \end{prop}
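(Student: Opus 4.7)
The plan is to anchor everything on a Drees-type pointwise bound on $\bar F_Z$ derived directly from the representation in {\rm (SO)}. Writing, for $x, t > 0$,
\[
\frac{\bar F_Z(xt)}{\bar F_Z(x)} = t^{-\alpha} \exp \int_x^{xt} \frac{\eta(s)}{s} \, \d s \; ,
\]
I use $|\eta| \leq C\eta^*$ and the regular variation of $\eta^*$ at index $-\alpha\beta$ to bound the integral in the exponent and obtain
\[
\left| \frac{\bar F_Z(xt)}{\bar F_Z(x)} - t^{-\alpha} \right| \leq C\, t^{-\alpha}\, \eta^*(x)\, \Psi(t) \; ,
\]
uniformly in $t \geq t_0 > 0$ and large $x$, where $\Psi$ is a tame function (bounded when $\beta>0$, of logarithmic growth when $\beta=0$). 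A standard Potter supplement extends the estimate to all $t > 0$ at the cost of an extra factor $t^{\pm\delta}$ with $\delta$ arbitrarily small.

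For the transfer to $\bar F$, I condition on $X$:
\[
\bar F(u_n t) = \esp\bigl[\bar F_Z(u_n t / \sigma(X))\bigr] \; ,
\]
and apply the pointwise inequality with $x = u_n/\sigma(X)$ to write $\bar F_Z(u_n t/\sigma(X)) = \bar F_Z(u_n/\sigma(X))[t^{-\alpha} + R_n(X,t)]$, with $|R_n(X,t)| \leq Ct^{-\alpha}\Psi(t)\eta^*(u_n/\sigma(X))$. The leading term yields $t^{-\alpha}\esp[\bar F_Z(u_n/\sigma(X))] = t^{-\alpha}\bar F(u_n)$. For the remainder, regular variation of $\eta^*$ gives $\eta^*(u_n/\sigma(X)) \leq C\sigma^{\alpha\beta+\delta}(X)\eta^*(u_n)$ and Potter bounds give $\bar F_Z(u_n/\sigma(X)) \leq C\sigma^{\alpha+\delta}(X)\bar F_Z(u_n)$, so the remainder is dominated by $t^{-\alpha}\Psi(t)\eta^*(u_n)\bar F_Z(u_n)\cdot\esp[\sigma^{\alpha(\beta+1)+2\delta}(X)]$, which is finite by the moment hypothesis for $\delta$ small. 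Dividing by $\bar F(u_n)$ and taking $\sup_{t\geq1}$ delivers (\ref{eq:rate-T_n}) and, through the Drees--Karamata characterization, the membership $\bar F\in 2RV(-\alpha,\eta^*)$.

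For the $L^p$ estimate (\ref{eq:rate-G_n}), I apply the paragraph-one inequality pointwise in $\sigma(X)$, now with $x = u_n$ and ratio $t = (1+s)/\sigma(X)$, to obtain
\[
\sup_{s\geq0} |G_n(X,s) - \sigma^\alpha(X) T(s)| \leq C\,\eta^*(u_n)\, \sigma^{\alpha}(X)\, \Xi(\sigma(X)) \; ,
\]
where $\Xi(\sigma(X))$ absorbs the regularly-varying corrections from $\eta^*(u_n/\sigma(X))$ and from the Potter extension of $\Psi$ to all values of $(1+s)/\sigma(X)$, and is of order $\sigma^{\alpha\beta+o(1)}(X)$. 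Raising to the $p$-th power and integrating reduces the claim to $\esp[\sigma^{p\alpha(\beta+1)+p\delta}(X)]<\infty$, which is exactly the content of the hypothesis $p\alpha(\beta+1)<\alpha(\beta+1)+\epsilon$ for sufficiently small $\delta$. The main technical obstacle is making the sup-in-$s$ estimate genuinely uniform across all values of $\sigma(X)$: for large $\sigma(X)$ the argument $u_n(1+s)/\sigma(X)$ leaves the asymptotic regime where (SO) directly applies and $\Psi$ must be replaced by Potter bounds, while for $\beta=0$ the slowly varying $\eta^*$ forces extra logarithmic corrections. Both issues are handled by splitting on $\{\sigma(X) \leq u_n^{1-\tau}\}$ versus its complement and using Markov's inequality on the complement, which is where the slack $\epsilon$ in the moment hypothesis is consumed.
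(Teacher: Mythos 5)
Your treatment of the two rate estimates \eqref{eq:rate-T_n} and \eqref{eq:rate-G_n} is essentially the paper's own argument: a non-uniform pointwise bound $|\bar F_Z(xt)/\bar F_Z(x)-t^{-\alpha}|\leq C\eta^*(x)t^{-\alpha}\Psi(t)$ extracted from the representation in (SO), applied with the random ratio $(1+s)/\sigma(X)$, followed by Potter bounds on $\eta^*$ and $\bar F_Z$ to reduce everything to the moment hypothesis. Your handling of the regime where $u_n(1+s)/\sigma(X)$ is small (splitting on $\{\sigma(X)\leq u_n^{1-\tau}\}$) plays the same role as the third case in the paper's Lemma on the non-uniform rate. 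That part is sound.

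The gap is in the claim $\bar F\in 2RV(-\alpha,\eta^*)$. In this paper $2RV$ is defined by the \emph{representation} \eqref{eq:representation}--\eqref{eq:borne-eta}: one must exhibit a measurable $\tilde\eta$ with $|\tilde\eta|\leq C\eta^*$ such that $\bar F(y)=cy^{-\alpha}\exp\int_1^y \tilde\eta(s)s^{-1}\,\d s$. What you prove is the uniform rate $\sup_{t\geq1}|\bar F(xt)/\bar F(x)-t^{-\alpha}|=O(\eta^*(x))$, and you then invoke a ``Drees--Karamata characterization'' to pass back to the representation. No such implication is automatic: the rate condition controls the increments of $\log(y^\alpha\bar F(y))$ over multiplicative intervals, not its local logarithmic derivative, which is exactly what $\tilde\eta$ is. (A slowly varying part of the form $1+\eta^*(y)\sin(y^{100})$ satisfies an $O(\eta^*)$ increment bound while having an enormous logarithmic derivative.) The general representation theorems for slow variation with remainder produce an extra non-constant prefactor $c(y)\to c$ in front of the exponential, which the paper's definition of $2RV$ excludes. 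The paper avoids this entirely by constructing $\tilde\eta$ explicitly: writing $\tilde\ell(y)=y^\alpha\pr(Y>y)=\esp[\xi^\alpha\ell(y/\xi)]$ with $\xi=\sigma(X)$, it uses the integral equation $\ell(t)=\ell(1)+\int_1^t\eta(s)\ell(s)s^{-1}\,\d s$ satisfied by $\ell$ to show that $\tilde\ell$ satisfies the same equation with
\begin{align*}
  \tilde\eta(s)=\frac{\esp[\xi^\alpha\eta(s/\xi)\ell(s/\xi)]}{\esp[\xi^\alpha\ell(s/\xi)]}\;,
\end{align*}
and then checks that the denominator is bounded away from zero and that $|\tilde\eta|\leq C\eta^*$ via Potter bounds on $\eta^*$ and $\ell$. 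You would need either this explicit construction or a precise citation of a theorem converting your uniform rate bound into the constant-prefactor representation; as written, that step does not follow.
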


\paragraph{Examples}

The most commonly used second order assumption is that $\eta^*(s) =
O(s^{-\alpha\beta})$ for some $\beta>0$. Then
\begin{gather}
  \label{eq:Pareto-second-order}
  \bar F_Z(x) = cx^{-\alpha}(1 + O(x^{-\alpha\beta})) \ \ \mbox{ as }
  x\to\infty \; ,
\end{gather}
for some constant $c>0$.  Then, $\|T_n-T\|_\infty =
O((k/n)^{\beta})$, and the second order
condition~(\ref{eq:second-ordre-unprimitive}) becomes
\begin{equation}
  \label{eq:negligibility-1}
  \lim_{n\to\infty} k  \left(\frac{k}{n}\right)^{2\beta} = 0 \; ,
  \ \mbox{ if } \ \lim_{n\to\infty} k\rho_n^q = 0
\end{equation}
and
\begin{equation}\label{eq:negligibility-2}
  \lim_{n\to\infty}  \rho_n^{-q} \left(\frac{k}{n}\right)^{2\beta} =  0
  \ \mbox{ if }
  \ \lim_{n\to\infty} k\rho_n^q = \infty \; .
\end{equation}
Condition~(\ref{eq:negligibility-1}) holds if both $k \ll
n^{(2\beta)/(2\beta+1)}$ and $k \ll n^{2(1-H)}$. The central limit
theorem with rate $\sqrt k$ holds if $k\asymp n^\gamma$ with
$$
\gamma < 2(1-H) \vee \frac{2\beta}{2\beta+1} \; .
$$
Condition~(\ref{eq:negligibility-2}) holds if $n^{2(1-H)} \ll k \ll
n^{1-(1-H)/\beta}$. This may happen only if
\begin{align*}
  \beta > \frac{1-H}{2H-1}
\end{align*}
or equivalently
$$
1 > H > \frac{1+\beta}{2\beta+1} \; .
$$
As $\beta\to0$, only for very long memory processes (i.e. $H$ close
to 1) will the LRD zone be possible.

The extreme case is the case $\beta=0$, i.e. $\eta^*$ slowly
varying. For instance, if $\eta^*(x) = 1/\log(x)$ (for $x$ large),
then the tail $\bar F(x) = x^{-\alpha} \log(x)$ belongs to
$2RV(-\alpha,\eta^*)$ and $U(t) \sim
\{t\log(t)/\alpha\}^{-1/\alpha}$. The second order
condition~(\ref{eq:second-ordre-unprimitive}) holds if
\begin{align*}
  k^{1/2} \log^{-1}(n) \to 0 \; .
\end{align*}
If this condition holds, then $k \rho_n^q\to0$ for any $H>1/2$ and the LRD zone
never arises, because the LRD term in the
decomposition~(\ref{eq:decomposition-sv}) is always dominated by the bias.

\section{Numerical results}\label{sec:numstudies}
We conducted some simulation experiments to illustrate our results.  We used
{\tt R} functions {\tt HillMSE()} and {\tt HillPlot} available on the authors
webpages.

Our first experiment deals with the Mean Squared Error.
\begin{enumerate}
\item Using {\tt R-fracdiff} package we simulated fractional Gaussian noises
  sequences $\{X_i(d)\}$ with parameters $d=0, 0.2, 0.4, 0.45$. Here, $d=H-1/2$, so that $d=0$ corresponds to the case of an
  i.i.d. sequence.
\item We simulated $n=1000$ i.i.d. Pareto random variables $Z_i$ with
  parameters $\alpha=1$ and $2$.
\item We set $Y_i(d)=\exp(X_i(d))Z_i$.
\item Hill estimator was constructed for different number of extremes.
\item This procedure was repeated 10000 times.
\item The results are displayed on Figure \ref{fig:1}, for $\alpha=1$ and
  $\alpha=2$, respectively. On each plot, we visualise Mean Square Error (with
  the true centering) w.r.t. the number of extremes. Solid lines represent
  different LRD parameters: black for $d=0$, blue for $d=0.2$, red
  for $d=0.4$ and green for $0.45$.
\end{enumerate}
\begin{figure}
  \begin{center}
    \includegraphics [width=0.45\textwidth,height=.3\textheight]{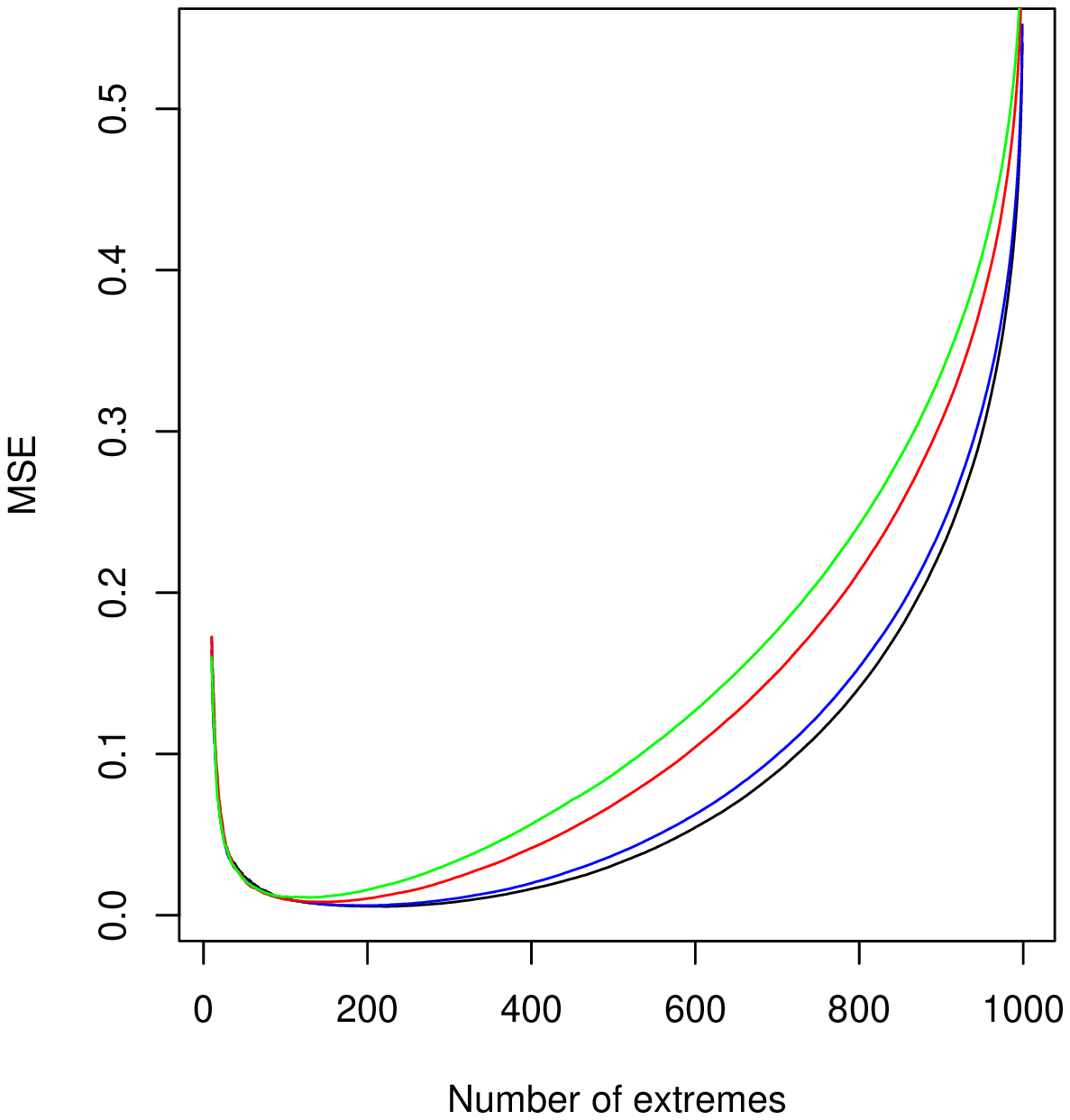}
    \includegraphics[width=0.45\textwidth,height=.3\textheight]{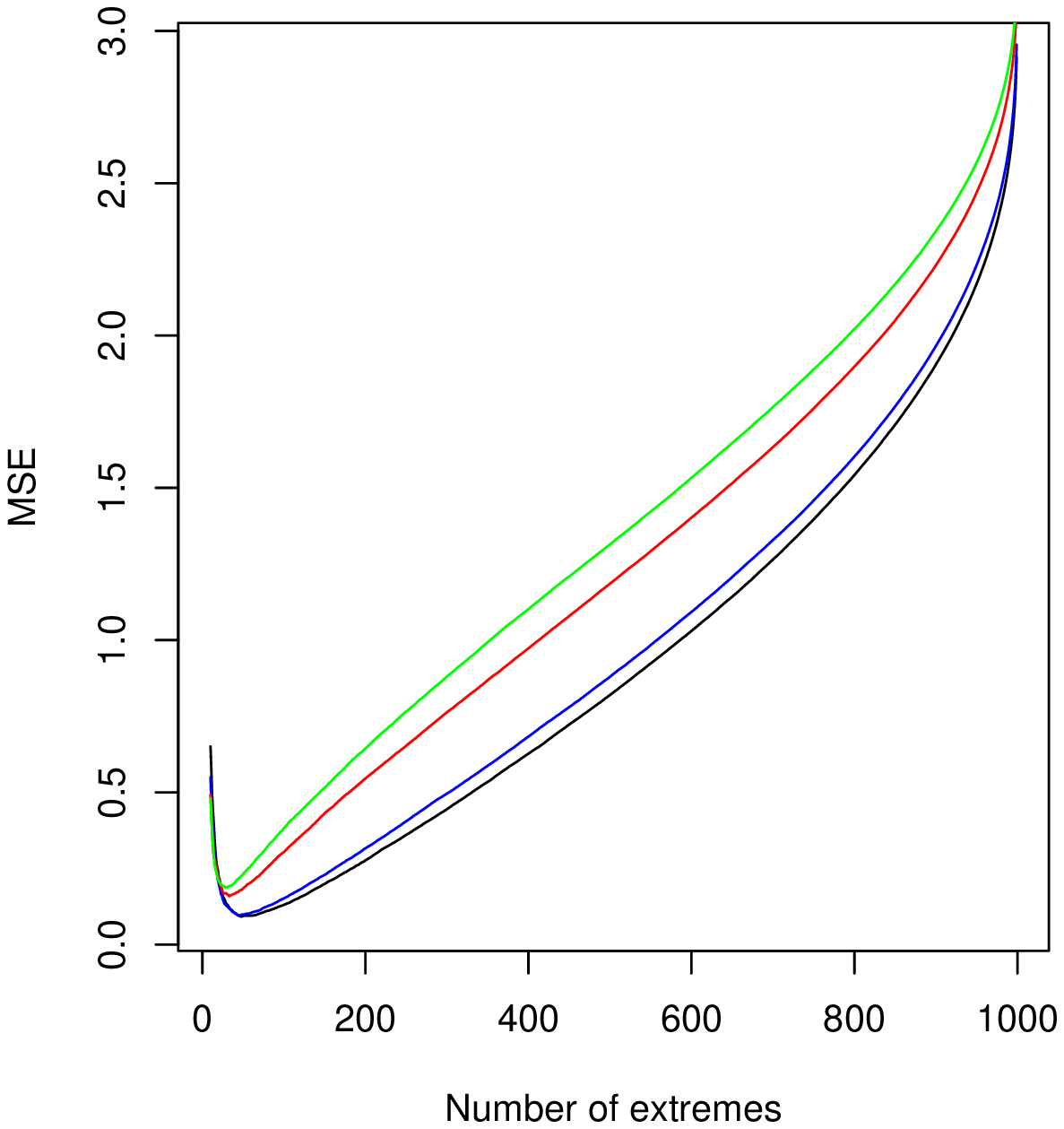}
    \caption{\small MSE: $\alpha=1$ (left panel), $\alpha=2$ (right panel);
    color codes: black - $d=0$, blue - $d=0.2$, red - $d=0.4$, green - $0.45$} \label{fig:1}
\end{center}
\end{figure}
We note that for $\alpha=1$, when a small number of extreme order
statistics $k$ is used to build the Hill estimator, there is not
much influence of the LRD parameter, and in particular the MSE is
minimal for more or less the same values of $k$ through all the
range of values of $d$. This is in accordance with our theoretical
results. For $\alpha=2$, the influence of the memory parameter is
more significant. These two features can be interpreted.  First, it
seems natural that the long memory effect appears when a greater
number of extreme order statistics is used, since our result is of
an asymptotic nature. For a small number of extremes the i.i.d. type
of behaviour dominates (see $R_n(\cdot)$ in
(\ref{eq:decomposition-sv})), so the asymptotic result is seen; for
a larger number of extremes, the long memory term $S_n$ in
(\ref{eq:decomposition-sv}) starts to dominate.  For an extremely
large number of order statistics (i.e. $k\asymp n$), the bias
dominates. The influence of $\alpha$ on the quality of the
estimation is twofold. On one hand, the asymptotic variance of the
Hill estimator is $\alpha^2$, so that the MSE increases with
$\alpha$. Also, for very small values of $\alpha$, the peaks
observed are extremely high and completely overshadow the effect of
long memory.

Next, we show Hill plots for several models, since in practice one usually
deals with just a single realization.
\begin{enumerate}
\item We consider the model $Y_i=\exp(\tau X_i)Z_i$, where $\{X_i\}$ is as
  above a fractional Gaussian noise and $\tau=0.05$ or $2$.
\item We simulated $n=1000$ i.i.d. Pareto random variables $Z_i$ with
  parameter $\alpha=2$.
\item We simulated fractional Gaussian noise sequences $\{X_i\}$ with
  parameters $d=0$ (i.i.d. case), 0.2, 0.4, $0.45$.
\item The estimators are plotted on Figures \ref{fig:2} and \ref{fig:3}. The
  left panel corresponds to the Hill estimator for iid Pareto random variables
  $\{Z_i\}$, and the right one for the long memory stochastic volatility
  process $\{Y_i\}$. Recall that the $Y_i$ are dependent asympotically Pareto
  random variables, so that there are two sources of bias for the Hill
  estimator.
\end{enumerate}
\begin{figure}
  \begin{center}
    \includegraphics[width=1\textwidth,height=.3\textheight]{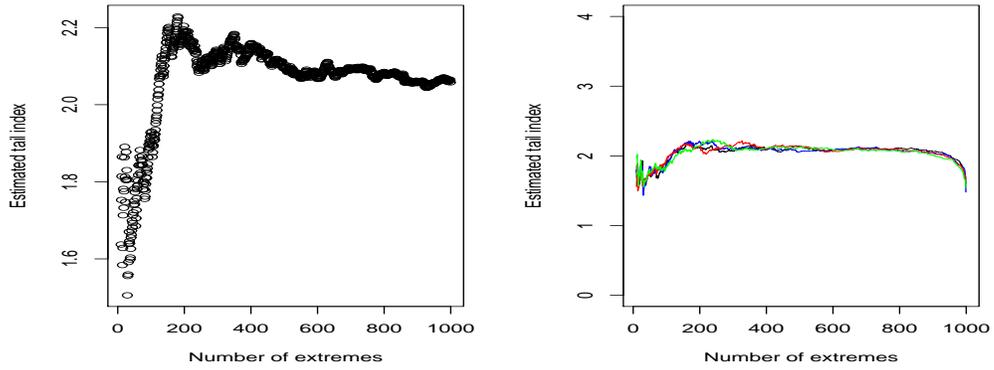}
 \caption{\small Hill estimator: $\alpha=2$ and Pareto iid (left panel),
   $\tau=0.05$ (right panel); color codes: black - $d=0$, blue - $d=0.2$, red - $d=0.4$, green - $0.45$}
\label{fig:2}
\end{center}
\end{figure}
\begin{figure}
    \begin{center}
      \includegraphics[width=1\textwidth,height=.3\textheight]{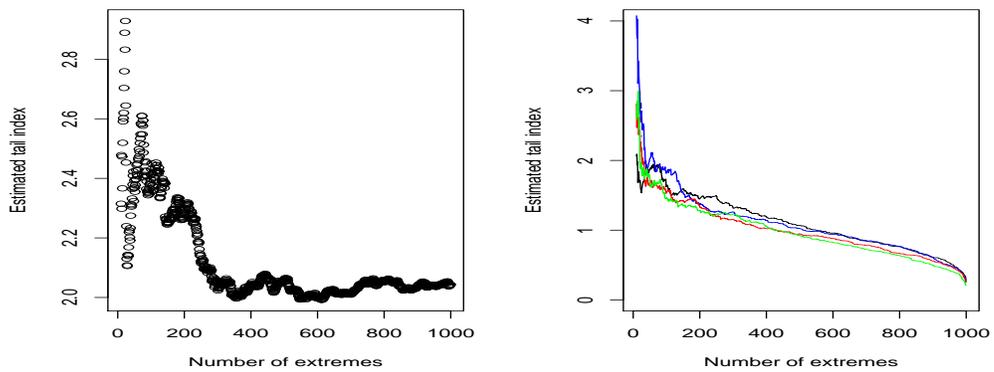}
      \caption{\small Hill estimator: $\alpha=2$ and Pareto iid (left panel),
        $\tau=1$ (right panel); color codes: black - $d=0$, blue - $d=0.2$, red - $d=0.4$, green - $0.45$}
\label{fig:3}
\end{center}
\end{figure}
We may observe that for a small volatility parameter $\tau$ there is not too
much difference between the two plots. However, if $\tau$ becomes bigger, the
estimation with a large number of extremes is completely inappropriate if
$d>0$, though without much influence of the strength of the dependence
(i.e. increase of $d$) on this degradation. The reason is that the second order
condition satisfied by the stochastic volatility model yields the same rate of
convergence as in the i.i.d. case, but an increase in the variance of the
Gaussian process $\{X_t\}$ entails a bigger bias in finite sample.

\section{Proofs}
\subsection{Gaussian long memory sequences}
\label{sec:LRD-Gaussian} Recall that each function $G(\cdot)$ in
$L^2(\d\mu)$, with $\mu(\d x) = (2\pi)^{-1/2} \exp(-x^2/2) \, \d x$
can be expanded as
$$
G(X) = \esp[G(X)] + \sum_{m=1}^{\infty}\frac{J(m)}{m!}H_m(X) \; ,
$$
where $J(m)=\Exp[G(X)H_m(X)]$ and $X$ is a standard Gaussian random
variable. Recall also that the smallest $q\ge 1$ such that
$J(q)\not=0$ is called the Hermite rank of $G$. We have
\begin{equation} \label{eq:Rozanov}
  \Exp[G(X_0)G(X_k)] = \esp[G(X_0)] + \sum_{m=q}^{\infty} \frac{J^2(m)}{m!} \,
\rho_k^m \; ,
\end{equation}
where $\rho_k=\Cov(X_0,X_k)$.  Thus, the asymptotic behaviour of
$\Exp[G(X_0)G(X_k)]$ is determined by the leading term $\rho_n^q$.
In particular, if $1-q(1-H)>1/2$, which implies that $n^2\rho_n^q
\to\infty$,
\begin{gather}\label{eq:Var-sum}
  \Var \left( \sum_{j=1}^nG(X_j) \right) \sim \frac{J^2(q)}{q!}  \;
  \frac{n^2 \rho_n^q}{1-2q(1-H)}
\end{gather}
and
\begin{gather}
  \label{eq:lim-sums}
  \frac{1}{n\rho_n^{q/2}} \sum_{j=1}^nG(X_j) \convdistr J(q) L_q \; ,
\end{gather}
where
\begin{align} \label{eq:def-L_q}
  L_q = (q!(1-2q(1-H))^{-1/2} Z_{H,q}(1)
\end{align}
and $Z_{H,q}$ is the so-called Hermite or Rosenblatt process of
order $q$, defined as a $q$-fold stochastic integral
\begin{align*}
  Z_{H,q}(t) =\int_{-\infty}^\infty \dots \int_{-\infty}^\infty
  \frac{\mathrm e^{\mathrm i t(x_1+\cdots+x_q)}-1}{x_1+\cdots+x_q} \,
  \prod_{i=1}^q x_i^{-H+1/2} \, W(\d x_1) \dots W(\d x_q) \; ,
\end{align*}
where $W$ is an independently scattered Gaussian random measure with
Lebesgue control measure.  For more details, the reader is referred
to \cite{Taqqu2003}. On the other hand, if $1-q(1-H)<1/2$ or
$\{X_j\}$ is weakly dependent, then
\begin{equation}\label{eq:lim-sums-iid}
\frac{1}{\sqrt{n}}\sum_{j=1}^nG(X_j)\convdistr {\cal
N}(0,\Sigma_0^2),
\end{equation}
where
$\Sigma_0^2=\Var(G(X_0))+2\sum_{j=1}^{\infty}\Cov(G(X_0),G(X_j))<\infty$.

We will also need the following variance inequalities of \cite{arcones:1994}:
\begin{itemize}
\item If $1-q(1-H)>1/2$, then for any function $G$ with
  Hermite rank $q$ ,
\begin{align}
  \label{eq:variance-inequality-lrd}
  \var\left( n^{-1}\sum_{j=1}^n G( X_j) \right) \leq C \rho_n^q \;
  \var(G(X_1)) \; .
\end{align}
\item If $1-q(1-H)<1/2$, then for any function $G$ with
  Hermite rank $q$ ,
\begin{align}
  \label{eq:variance-inequality-weak-dependence}
  \var\left(n^{-1} \sum_{j=1}^n G(X_j) \right) \leq C n^{-1} \; \var(G(X_1))
  \; .
\end{align}
\end{itemize}
In all these cases, the constant $C$ depends only on the Gaussian
process $\{X_j\}$ and not on the function $G$. The
bounds~(\ref{eq:variance-inequality-lrd})
and~(\ref{eq:variance-inequality-weak-dependence}) are Equation~3.10
and~2.40 in \cite{arcones:1994}, respectively.
\subsection{Decomposition of the tail empirical process}
The main ingredient of the proof of our results will be the
following decomposition. Let ${\cal X}$ be the $\sigma$-field
generated by the Gaussian process $\{X_n\}$.
\begin{align}
  e_n(s) & = \frac{1}{n\bar F(u_n)} \sum_{j=1}^n \left\{
    1_{\{Y_j>(1+s)u_n\}} - \pr(Y_j>(1+s)u_n|{\cal X}) \right\} \nonumber   \\
  & \ \ \ + \frac{1}{n\bar F(u_n)} \sum_{j=1}^n \left\{\pr(Y_j>(1+s)u_n|{\cal
      X}) - \bar F(u_n)\right\} \nonumber  \\
  & = :R_n(s) + S_n(s) \; .
  \label{eq:decomposition-sv}
\end{align}
Conditionally on $\mathcal X$, $R_n$ is the sum of independent
random variables, so it will be referred to as the i.i.d. part; the
term $S_n$ is the partial sum process of a subordinated Gaussian
process, so it will be referred to as the LRD part.

\subsection{Proof of Theorem~\ref{thm:general}}
We first give a heuristic behind the dichotomous behaviour in
Theorem \ref{thm:general}. Then, we prove convergence of the finite
dimensional distributions of the i.i.d. and LRD parts. Finally, we
prove tightness and asymptotic independence.
\subsubsection{Heuristic}\label{sec:heuristic}
To present some heuristic, let us compute covariance of the tail
empirical process. We have
\begin{align*}
  \Cov(\tilde T_n(s),\tilde T_n(t)) & = \frac{1}{n\bar
    F^2(u_n)}\Cov(1_{\{Y_1>u_n(1+s)\}},1_{\{Y_1>u_n(1+t)\}}) \\
  & \ \ \ + \frac{2}{n^2\bar F^2(u_n)}\sum_{j=1}^{n-1}(n-j)\Cov
  (1_{\{Y_1>u_n(1+s)\}},1_{\{Y_{j+1}>u_n(1+t)\}} \; .
\end{align*}
Recall (\ref{eq:cond-tail}). If
$\Exp[\sigma^{\alpha+\epsilon}(X_1)]<\infty$ holds, we apply
Breiman's Lemma to both nominator and denominator to get
\begin{multline*}
  \lim_{n\to\infty}\frac{\Cov(1_{\{Y_1>u_n(1+s)\}},1_{\{Y_1>u_n(1+t)\}})}{\bar
    F(u_n)} \\
  = \lim_{n\to\infty}\frac{\Exp[\sigma^{\alpha}(X_1)]P(Z_1>u_n(1+s)\vee
    u_n(1+t))}{\Exp[\sigma^{\alpha}(X_1)]P(Z_1>u_n)}=T(s\vee t) \; .
\end{multline*}
Furthermore, if
$\Exp[\sigma^{\alpha+\epsilon}(X_1)\sigma^{\alpha+\epsilon}(X_{j+1})]<\infty$
holds (which is guaranteed by (\ref{eq:moment-2alpha+epsilon})),
then a generalization of Breiman's Lemma yields
\begin{align*}
  \lim_{n\to\infty} & \frac{\Cov(1_{\{Y_1>u_n(1+s)\}},
    1_{\{Y_{j+1}>u_n(1+t)\}})} {\bar F^2(u_n)}  \\
  & = \lim_{n\to\infty} \frac{P(Y_1>u_n(1+s), Y_{j+1}>u_n(1+t))} {\bar
    F^2(u_n)} - T(s) T(t)  \\
  & = T(s) T(t) \left( \frac{ \Exp[\sigma^{\alpha}(X_1)
      \sigma^{\alpha}(X_{j+1})]} {\Exp[\sigma^{\alpha}(X_1)]
      \Exp[\sigma^{\alpha}(X_{j+1})]} - 1 \right) \; .
\end{align*}
Therefore, for fixed $s$ and $t$, using (\ref{eq:Var-sum}) in the
case $q(1-H)<1/2$, we obtain
\begin{align*}
  \Cov&(\tilde T_n(s), \tilde T_n(t)) \\
  & = (1+o(1)) \frac{T(s\vee t)} {n \bar F(u_n)}    \\
  & \ \ \ + (1+o(1))  \frac{T(s) T(t)}{ \Exp^2[\sigma^{\alpha}(X_1)]} \frac{1}{n}
  \sum_{j=1}^{n-1}\left(1-\frac jn \right)
  \Cov(\sigma^{\alpha}(X_1),\sigma^{\alpha}(X_{j+1}))
  \\
  &= (1+o(1)) \left( \frac{T(s\vee t)}{n\bar F(u_n)} + \frac{ T(s) T(t)
      J^2(q)\rho_n^q}{q!(1-2q(1-H))\Exp^2[\sigma^{\alpha}(X_1)]}\right).
\end{align*}
In particular, setting $s=t$, then we conclude that the
normalization factor for $e_n(\cdot)$ should be $\sqrt{n\bar
F(u_n)}$ or $\rho_n^{-q/2}$ depending whether $n \bar F(u_n)
\rho_n^q \to 0$ or $n \bar F(u_n) \rho_n^q \to \infty$ holds. The
asymptotic variance also suggests the form of limiting distributions
in Theorem \ref{thm:general}.
\subsubsection{Finite dimensional limits}
\label{sec:fidi}
Let $\convdistr$ denote weak convergence of finite
dimensional distributions.  It will be shown in Section
\ref{sec:iid-limit} and \ref{sec:LRD-limit}, respectively, that for
each $m\ge 1$ and $s_l\in [0,\infty)$, $l=1,\ldots,M$,
$s_1<\cdots<s_M$,
\begin{multline}
  \sqrt{n\bar F(u_n)} \left(R_n(s_1),R_n(s_l) - R_n(s_{l-1}), l = 2,
    \ldots, M \right)  \\
  \convdistr \left( {\cal N}(0,T(s_1)),{\cal N}(0,T(s_l) - T(s_{l-1})),
    l=2, \ldots, M \right) \; ,
  \label{eq:iid-limit}
\end{multline}
where the normal random variables are independent, and
\begin{equation}\label{eq:LRD-limit}
  \rho_n^{-q/2} (S_n(s_1),\ldots,S_n(s_M))
  \convdistr \frac{J(q)}{\Exp[\sigma^{\alpha}(X_1)]} (T(s_1),\ldots,T(s_M))
  L_q \; ,
\end{equation}
if $1-q(1-H)>1/2$.  On the other hand, if $1-q(1-H)<1/2$, then the
second term $S_n(\cdot)$ is of smaller order than the first one,
$R_n(\cdot)$.

\subsubsection*{The i.i.d. limit}\label{sec:iid-limit}
Define
\begin{align*}
  L_{n,j}(x,s) & =   1_{\{\sigma(x) Z_j>(1+s)u_n\} } - \pr(
  \sigma(x) Z_1 >(1+s)u_n) \; .
\end{align*}
Then
\begin{align*}
  R_n(s) = \sum_{j=1}^n L_{n,j}(X_j,s) \; .
\end{align*}
Set $L_{n,j}(x)=L_{n,j}(x,0)$ and $V_n^{(m)}(x)=\esp[L_{n,j}^m(x)]$.
Note that $\Exp[V_n^{(1)}(X_j)]=0$ and
\begin{align*}
  V_n^{(2)}(x) & = \pr (\sigma(x) Z_1 > u_n) - \pr^2( \sigma(x) Z_1 >
  u_n ) \; .
\end{align*}
Let $R_n:=R_n(0)$. Therefore, for fixed $t$,
\begin{align}
  & \log \Exp\left[\mathrm e^{\mathrm i t\sqrt{n\bar F(u_n)}R_n}|{\cal
      X} \right] \nonumber \\
  & = \sum_{j=1}^n \log \Exp \left[ \exp\left(\frac{\mathrm
        it}{\sqrt{n \bar F(u_n)}} \{  1_{\{Y_j>u_n\}} -
      \pr(Y_j>u_n \mid {\cal X}) \} \right) \mid {\cal X} \right]
  \nonumber  \\
  & = \sum_{j=1}^n \log\Exp \left[ 1 - \frac{  it}{\sqrt{n\bar
        F(u_n)}} L_{n,j}(X_j) - \frac{t^2}{2n \bar F(u_n)}
    L_{n,j}^{2}(X_j) + L_{n,j}^{3}(X_j) O\left(\frac{1}{(n \bar
        F(u_n))^{3/2}} \right)
    \mid {\cal X} \right] \nonumber  \\
  & = \frac{-t^2}{2n\bar
    F(u_n)}\sum_{j=1}^nV_n^{(2)}(X_j)+o\left(\frac{1}{n\bar
      F(u_n)}\right)\sum_{j=1}^nV_n^{(2)}(X_j)+O\left(\frac{1}{(n\bar
      F(u_n))^{3/2}}\right) \sum_{j=1}^n |V_n^{(3)}(X_j)| \; .
  \label{eq:decomp-chf}
\end{align}
We will show that
\begin{equation}\label{eq:lln}
\frac{1}{n\bar F(u_n)}\sum_{j=1}^nV_n^{(2)}(X_j)\convprob 1,
\end{equation}
given that $\Exp[\sigma^{\alpha+\delta}(X_1)]<\infty$.  This also
shows that the second term in (\ref{eq:decomp-chf}) is negligible.
Furthermore, since for sufficiently large $n$ and $\delta>0$ (cf.
(\ref{eq:bound-1})),
$$
|V_n^{(3)}(x)| \le C \pr (\sigma(x) Z_1 > u_n ) \le C ( \sigma(x)
\vee 1)^{\alpha+\delta} P(Z_1 > u_n) \; ,
$$
the expected value of the last term in (\ref{eq:decomp-chf}) is
$$
O\left(\frac{nP(Z_1>u_n)}{(n\bar F(u_n))^{3/2}}\right)\Exp[1\vee
\sigma^{\alpha+\delta}(X_1)]  \; .
$$
Consequently, the last term in~(\ref{eq:decomp-chf}) converges to 0
in $L^1$ and in probability.  Therefore, on account of
(\ref{eq:lln}) and the negligibility, we obtain,
\begin{equation}
  \label{eq:chf-1}
  \log\Exp \left[\mathrm e^{\mathrm it\sqrt{n\bar F(u_n)}R_n}|{\cal X} \right]
  \convprob -t^2/2
\end{equation}
and from bounded convergence theorem we conclude
\eqref{eq:iid-limit} (for $M=1$ and $s=0$).  It remains to prove
(\ref{eq:lln}). By Lemma~\ref{lem:convergence-uniforme-hermite}, for
each $j\ge 1$, $G_n(X_j,s)$ converges in probability and in $L^1$ to
$\sigma^\alpha(X_j)$.  Therefore,
\begin{equation}\label{eq:1}
  \lim_{n\to\infty}   \esp \left[ \left|
      \frac{1}{n} \sum_{j=1}^n  \frac{\pr(\sigma(X_j) Z_1 >
        u_n \mid \mathcal X)}{\pr(Z_1>u_n)} - \sigma^{\alpha}(X_j) \right| \right]
  = 0 \; .
\end{equation}
Next, since $\sigma^{\alpha}(X_j)$, $j\ge 1$, is ergodic, we have
\begin{equation}\label{eq:2}
  \frac{1}{n} \sum_{j=1}^n \sigma^{\alpha}(X_1) \convprob
  \esp[\sigma^\alpha(X_1)] \; .
\end{equation}
Thus, (\ref{eq:1}), (\ref{eq:2}) and Breiman's Lemma yields
\begin{align} \label{eq:bout-1}
  \frac{1}{n\bar F(u_n)} \sum_{j=1}^n \pr(\sigma(X_j) Z_1 > u_n \mid
  \mathcal X) \convprob 1 \; .
\end{align}
Write now
\begin{align*}
  \frac{1}{n \bar F(u_n)} \sum_{j=1}^n V_n^{(2)}(X_j) & =  1 + o_P(1) +
  \frac{1}{n\bar F(u_n)} \sum_{j=1}^n \pr^2 (\sigma(X_j) Z_1 > u_n
  \mid \mathcal X) \; .
\end{align*}
By Lemma~\ref{lem:convergence-uniforme-hermite}, we have, for some
$\delta>0$ small enough,
\begin{multline}
  \frac{1}{n\bar F(u_n)} \sum_{j=1}^n \pr^2 (\sigma(X_j) Z_1 > u_n
  \mid \mathcal X) \\
  \leq C \pr(Z>u_n) \frac1n \sum_{j=1}^n (\sigma(X_j)\vee1)^{2\alpha+\delta}
  \convprob 0 \; .
\end{multline}
This proves (\ref{eq:lln}) and (\ref{eq:iid-limit}) follows with
$M=1$ and $s_1=0$. The case of a general $M\ge 1$ is obtained
analogously.

\subsubsection*{Long memory limit}\label{sec:LRD-limit}
Recall the definition (\ref{eq:function-gn}) of $G_n(\cdot,s)$ and
that $G(x)=\sigma^{\alpha}(x)$.  Define
$$J_{n}(m,s)=\Exp[H_m(X_1)G_n(X_1,s)], \quad
J(m)=\Exp[H_m(X_1)G(X_1)],$$ the Hermite coefficients of
$G_n(\cdot,s)$ and $G(\cdot)$, respectively.  Let $q$ be the Hermite
rank of $G(\cdot)$.  We write (recall Assumption (H)),
\begin{eqnarray}
  \lefteqn{\sum_{j=1}^n (G_n(X_j,s)  - \Exp[G_n(X_j,s)])} \nonumber   \\
  & =& \sum_{j=1}^n \sum_{m=q}^{\infty} \frac{T(s)J(m)}{m!} H_m(X_j) +
  \sum_{j=1}^n \sum_{m=q}^{\infty} \frac{J_n(m,s)-T(s)J(m)}{m!}
  H_m(X_j)  \nonumber \\
  & = :& T(s) S_n^* + \tilde S_n(s) \; ,
  \label{eq:expansion}
\end{eqnarray}
with $S_n^* = \sum_{j=1}^n G(X_j)$.  On account of Rozanov's
equality (\ref{eq:Rozanov}), we have that the variance of the second
term is
\begin{align}
  \var(\tilde S_n(s) ) & = \sum_{i,j=1}^n \sum_{m=q}^{\infty}
  \frac{(J_n(m,s)-T(s)J(m))^2}{m!}  \Cov^m(X_i,X_j) \nonumber
  \\
  & \leq \sum_{i,j=1}^n |\Cov^{q}(X_i,X_j)| \sum_{m=q}^{\infty}
  \frac{(J_n(m,s)-T(s)J(m))^2}{m!}  \nonumber
  \\
  & = \|G_n(\cdot,s)-T(s)G(\cdot)\|_{L^2(d\mu)}^2 \sum_{i,j=1}^n
  |\Cov^{q}(X_i,X_j)| \nonumber \\
  & \leq C n^2 \rho_n^q \, \|G_n(\cdot,s)-T(s)G(\cdot)\|_{L^2(d\mu)}^2
  \; .  \label{eq:control-Hermite}
\end{align}
Since $\esp[\sigma^{2\alpha+\delta}(X)]<\infty$, by
Lemma~\ref{lem:convergence-uniforme-hermite}, $G_n(\cdot,s)$
converges $T(s)G(\cdot)$ in $ L^2(d\mu)$, uniformly with respect to
$s$.
We conclude that the second term on the right handside of
(\ref{eq:expansion}) is $o_P\left(n\rho_n^{q/2}\right)$, i.e. it is
asymptotically smaller than the first term.
Furthermore,
\begin{equation}\label{eq:claim-2}
  S_n(s) = \frac{P(Z_1>u_n)}{n\bar F(u_n)}
  \sum_{j=1}^n\left(G_n(X_j,s)-\Exp[G_n(X_j,s)]\right),
\end{equation}
so that via (\ref{eq:lim-sums}) and (\ref{eq:bound-2})
\begin{equation}\label{eq:chf-2}
   \rho_n^{-q/2} S_n(s) \convdistr
  \frac{J(q)T(s)}{\Exp[\sigma^{\alpha}(X_1)]} L_q \; ,
\end{equation}
if $1-q(1-H)>1/2$.  Consequently, (\ref{eq:LRD-limit}) holds for
$M=1$. The multivariate case follows immediately.  On the other
hand, if $1-q(1-H)<1/2$, then via (\ref{eq:lim-sums-iid}) and
(\ref{eq:bound-2}),
$$
\sqrt{n} \sup_{s\in [0,1]} S_n(s) \convdistr
\frac{1}{\Exp[\sigma^{\alpha}(X_1)]}{\cal N}(0,\Sigma_0^2) \; ,
$$
which proves negligibility with respect to the term $R_n(\cdot)$.

\subsubsection{Asymptotic independence}
In this section we prove asymptotic independence of $R_n(\cdot)$ and
$S_n(\cdot)$. We will carry out a proof for the joint characteristic
function of $(R_n,S_n)=(R_n(0),S_n(0))$. Extension to multivariate
case is straightforward. On account of (\ref{eq:chf-1}),
(\ref{eq:chf-2}) and the bounded convergence theorem, we have
\begin{eqnarray*}
 \lefteqn{ \Exp \left[ \exp \left\{ \mathrm i s \sqrt{n\bar F(u_n) } R_n +
      \mathrm it \rho_n^{-q/2} S_n \right\} \right]}\\
       & =& \Exp \left[
    \Exp [ \exp\{\mathrm i s \sqrt{n\bar F(u_n)} R_n\} \mid {\cal X} ]
    \exp \left( \mathrm i t \rho_n^{-q/2} S_n    \right) \right] \\
  & \to & \exp(-s^2/2) \, \psi_{L_q}
  \left(\frac{J(q)}{\Exp[\sigma^{\alpha}(X_1)]} \, t \right) \
  \mbox{ as } n \to \infty \; ,
\end{eqnarray*}
where $\psi_{L_q}(\cdot)$ is the characteristic function of $L_q$.
This proves asymptotic independence.

\subsubsection{Tightness}
\label{sec:tightness-deterministic-level} In order to prove the
tightness in $\mathcal D([0,\infty))$ endowed with Skorokhod's $J_1$
topology of the sequence of processes $R_n':=\sqrt{n\bar
F(u_n)}R_n$, we apply the tightness criterion of \cite[Theorem
15.4]{billingsley:1968}. We must prove that for each $A>0$ and
$\epsilon>0$,
\begin{align}
  \label{eq:bill-criterion-15.4}
  \lim_{\delta\to0} \limsup_{n\to\infty}
\pr(w_A^{\prime\prime}(R_n',\delta)>\epsilon) = 0 \; ,
\end{align}
where for any function $g \in\mathcal D([0,\infty))$,
\begin{align*}
  w_A^{\prime\prime}(g,\delta) = \sup_{0\leq t_1\leq s \leq t_2 \leq A}
  |g(s)-g(t_1)|\wedge |g(t_2)-g(s)| \; .
\end{align*}
Since the $Y_i$s are independent conditionally on $\mathcal X$, by
elementary computations similar to those that lead to
\cite[Inequality 13.17]{billingsley:1968}, we obtain that
\begin{align}
   \label{eq:3Qn}
   \esp\left[|R_n'(s)-R_n'(t_1)|^2 |R_n'(t_2)-R_n'(s)|^2 \mid \mathcal X\right]
    \leq 3
   \{Q_n(t_1)-Q_n(t_2)\}^2 \; ,
\end{align}
where
\begin{align*}
  Q_n(s) = \frac{1}{n\bar F(u_n)} \sum_{j=1}^n \bar F_Z(u_n(1+s)/\sigma(X_j)) \; .
\end{align*}
Note that $Q_n(s)$ converges in probability to $T(s)$ which is a
continuous decreasing function on $[0,\infty)$.  Let $m\geq1$ be an
integer and set $\delta=A/2m$. Applying \cite[Theorem
12.5]{billingsley:1968} and using the same arguments as in the proof
of \cite[Theorem 15.6]{billingsley:1968} (p. 129, Eq. (15.26); note
that the assumed continuity of the function $F$ that appears therein
is not used to obtain (15.26)), we see that the bound~(\ref{eq:3Qn})
yields, for some constant $C$ (whose numerical value may change upon each
appearance),
\begin{align*}
  \pr(w_A^{\prime\prime}(R_n',\delta)>\epsilon \mid \mathcal X) & \leq C
  \epsilon^{-4} \sum_{k=0}^{2m-1} \{Q_n(k\delta)-Q_n((k+2)\delta)\}^2  \\
  & \leq C \epsilon^{-4} Q_n(0) \max_{0\leq k \leq 2m-1}
  \{Q_n(k\delta)-Q_n((k+2)\delta)\} \; .
\end{align*}
Letting now $n\to\infty$ yields
\begin{multline*}
  \limsup_{n\to\infty} \pr(w_A^{\prime\prime}(R_n',\delta)>\epsilon \mid
  \mathcal X)  \\
  \leq C \epsilon^{-4} \max_{0\leq k \leq 2m-1} \{T(k\delta) - T((k+2)\delta)\}
  \leq C \epsilon^{-4} \delta^{\alpha\wedge1} \; .
\end{multline*}
By bounded convergence, this yields
\begin{align*}
  \limsup_{n\to\infty} \pr(w_A^{\prime\prime}(R_n',\delta)>\epsilon ) \leq C
  \epsilon^{-4} \delta^{\alpha\wedge1} \; ,
\end{align*}
and~(\ref{eq:bill-criterion-15.4}) follows.

We prove now tightness of $S_n$. Assume first $1-q(1-H)>1/2$ and
define $S_n'=\rho_n^{-q/2}S_n$. Applying
(\ref{eq:variance-inequality-lrd}) there exists a constant $C$,
which depends only on the Gaussian process $\{X_j\}$, such that we
have, for $s\leq t$,
\begin{align*}
  \var(S_n'(s)-S_n'(t)) & \leq C \var(G_n(X_1,s)-G_n(X_1,t)) \\
  \leq & C \esp \left[ \frac{\pr^2(u_n(1+s) \leq \sigma(X_1) Z_1 \leq u_n(1+t) \mid
    \mathcal X)}{\pr(Z>u_n)} \right]
\end{align*}
Let the expectation in last term be denoted by $Q'_n(s,t)$. By the
same adaptation of the proof of \cite[Theorem
15.6]{billingsley:1968} as previously (see also
\cite{genest:ghoudi:remillard:1996} for a more general extension),
we obtain, for each $A>0$, and for $\delta=A/2m$ for an integer
$m\geq1$,
\begin{align*}
  \pr(w_A^{\prime\prime}(S_n',\delta)>\epsilon) \leq C \epsilon^{-2}
  \sum_{k=0}^{2m-1} Q'_n(2k\delta,(2k+2)\delta) \; .
\end{align*}
Thus, letting $n$ tend to infinity while keeping $m$ fixed, we get
\begin{align*}
  \limsup_{n\to\infty} \pr(w_A^{\prime\prime}(S_n',\delta) > \epsilon) & \leq C
  \epsilon^{-2} \sum_{k=0}^{2m-1}\{(1+2k\delta)^{-\alpha} -
  (1+(2k+2)\delta)^{-\alpha} \}^2 \\
& \leq C \epsilon^{-2} \delta^2 \sum_{k=0}^{2m-1}
(1+2k\delta)^{-2\alpha-2} \leq C\epsilon^{-2} \delta \; .
\end{align*}
Thus $\lim_{\delta\to0} \limsup_{n\to\infty}
\pr(w_A^{\prime\prime}(S_n',\delta)
> \epsilon) = 0$ and this concludes the proof of tightness.

\subsection{Proof of Corollary \ref{cor:practical} and Theorem
  \ref{thm:practical-1} }
As in case of Theorem \ref{thm:general}, we start some heuristic.
Recall computation from Section \ref{sec:heuristic} and the form of
the limiting distribution $w-T\cdot w(0)$. Then
\begin{eqnarray*}
\lefteqn{\Var(\hat T_n(s)) =(1+o(1))\Var(\tilde T_n(s)-T(s)\tilde
T_n(0))}\\
&=& (1+o(1))\frac{1}{n\bar F(u_n)}T(s)(1-T(s))+o(1)T^2(s)\rho_n^{q}.
\end{eqnarray*}
This suggests that in LRD zone $\rho_n^{-q/2}\hat T_n(\cdot)$
converges to 0.\\

To prove it formally, denote $\bar T_n = T_n-T$ and
$\xi_{n}=\frac{Y_{n-k:n} - u_n}{u_n} = \tilde T_n^{\leftarrow}(1)$.
Then $\tilde T_n(\xi_n)=1$, and we have
\begin{align*}
  1 = e_n(\xi_n) + T_n(\xi_n) = e_n(\xi_n) + \bar T_n(\xi_n) +
  T(\xi_n) \; .
\end{align*}
Thus,
\begin{align}
  \label{eq:ecriture-xi_n}
  T(\xi_n) - 1 = - e_n(\xi_n) - \bar T_n(\xi_n) \; .
\end{align}
For any $s \geq 0$, $\hat T_n(s) = \tilde T_n(s+\xi_n(1+s))$ and
$T(s+\xi_n(1+s)) = T(s)T(\xi_n)$, thus
\begin{align*}
  \hat e_n^*(s) & = e_n(s+\xi_n(1+s)) + \bar T_n(s+\xi_n(1+s)) +
  T(s+\xi_n(1+s)) - T(s) \\
  & = e_n(s+\xi_n(1+s)) + T(s)\{T(\xi_n)-1\} + \bar T_n(s+\xi_n(1+s))
  \; .
\end{align*}
Plugging~(\ref{eq:ecriture-xi_n}) into this decomposition of $\hat
e_n^*$, we get
\begin{align}
  \hat e_n^*(s)
  & = e_n(s+\xi_n(1+s)) - T(s) e_n(\xi_n) + \bar T_n(s+\xi_n(1+s)) -
  T(s) \bar T_n(\xi_n) \; . \label{eq:expansion-clean}
\end{align}
In order to prove Corollary~\ref{cor:practical}, we write
\begin{align}
  w_n \hat e_n^*(s) & = w_n \{ e_n(s+\xi_n(1+s)) - T(s) e_n(\xi_n) \} +
  O(w_n\|T_n-T\|_\infty) \; . \label{eq:proof-practical}
\end{align}
Since the convergence in Theorem \ref{thm:general} is uniform, and
by Corollary \ref{coro:intermediate} $\xi_n=o_P(1)$, the first term
in~(\ref{eq:proof-practical}) converges in $D([0,\infty))$ to
$w-T\cdot w(0)$. Under the second order
condition~(\ref{eq:second-ordre-unprimitive}), the second term is
$o(1)$.  This concludes the proof of Theorem~\ref{cor:practical}.

We now prove Theorem~\ref{thm:practical-1}. In order to study the
second-order asymptotics of $w_n \hat e_n^*(s)$, we need precise
expansion for $e_n(s+\xi_n(1+s))$ and $e_n(\xi)$. For this we will
use the expansions of the tail empirical process in
Section~\ref{sec:fidi}.
Since $\bar F(u_n)=k/n$, using~(\ref{eq:decomposition-sv}),
(\ref{eq:expansion}) and (\ref{eq:claim-2}), we have
\begin{gather}
  e_n(s) = R_n(s) + \frac{ \bar F_Z(u_n)}{n\bar F(u_n)} \, T(s) S_n^*
  + \frac{ \bar F_Z(u_n)}{n\bar F(u_n)} \tilde S_n(s) \; ,
\label{eq:expansion-4}
\end{gather}
which, noting again that $T(s+\xi_n(1+s)) = T(s)T(\xi_n)$, yields
\begin{align*}
  e_n(s+\xi_n(1+s)) - T(s) e_n(\xi_n) & = R_n(s+\xi_n(1+s)) - T(s)
  R_n(\xi_n)  \\
  & \ \ \ + \frac{ \bar F_Z(u_n)}{n\bar F(u_n)} \{ \tilde
  S_n(s+\xi_n(1+s)) - T(s) \tilde S_n(\xi_n) \} \;
\end{align*}
and
\begin{multline}
  \label{eq:decomp-non-uniform}
  \hat e_n^*(s) = R_n(s+\xi_n(1+s)) - T(s) R_n(\xi_n) \\
  + \frac{ \bar F_Z(u_n)}{n\bar F(u_n)} \{ \tilde S_n(s+\xi_n(1+s)) -
  T(s) \tilde S_n(\xi_n) \} + \bar T_n(s+\xi_n(1+s)) - T(s) \bar
  T_n(\xi_n) \; .
\end{multline}
Similarly to (\ref{eq:control-Hermite}), and utilising $\bar
F_Z(u_n)/\bar F(u_n) = O(1)$,
\begin{align*}
  \var \left( \frac{ \bar F_Z(u_n)}{n\bar F(u_n)} \tilde S_n(s)
  \right) \leq C  \{\rho_n^q \vee \ell_1(n)n^{-1} \}
  \|G_n(\cdot,s) - T(s)G(\cdot)\|_{L^2(\mu)}^2 \; .
\end{align*}
Using the second order Assumption (SO) through~(\ref{eq:rate-G_n}),
we obtain
\begin{align}
  \var \left( \frac{ \bar F_Z(u_n)}{n\bar F(u_n)} \tilde S_n(s)
  \right) = O\left(  \{\rho_n^q \vee \ell_1(n)n^{-1} \}
    \eta^*(u_n)^2\right) = o \left( \eta^*(u_n)^2 \right) \; .
  \label{eq:bias}
\end{align}
Using (\ref{eq:expansion-4}) in the representation
(\ref{eq:expansion-clean}) and since
Proposition~\ref{prop:transfert-second-ordre} implies that
$\|T_n-T\|_\infty = O(\eta^*(u_n))$, we obtain:
\begin{align*}
  \hat e_n^*(s) & = R_n(s+\xi_n(1+s)) - T(s) R_n(\xi_n) +
  O_P(\eta^*(u_n)) \; . 
\end{align*}
Since we have already proved that the convergence of $\sqrt k R_n$
is uniform, we obtain that $\sqrt k e_n^*$ converges in the sense of
finite dimensional distribution to $B\circ T$, where $B$ is the
Brownian bridge, if the second order
condition~(\ref{eq:negligibility-1a}) holds. To prove tightness, we
only have to prove that $k^{1/2} n^{-1} S_n$ converges uniformly to
zero on compact sets. For $s\geq0$ and $x\in\mathbb R$, denote $\bar
G_n(x,s) = G_n(x,s)-T(s)G(x)$ and recall that we have shown in
Section~\ref{sec:tightness-deterministic-level} that
\begin{align*}
  n^{-2} \var(\tilde S_n(s)-\tilde S_n(s')) & \leq C \| \bar
  G_n(\cdot,s_2) - \bar G_n(\cdot,s_1) \|_{L^2(d\mu)}^2 \; .
\end{align*}
Applying~(\ref{eq:pour-th-random}), we get
\begin{equation}\label{eq:uniform-conv}
  n^{-2} \var(\tilde S_n(s)-\tilde S_n(s'))  \leq C (\eta^*(u_n))^2
  \esp\left [(\sigma(x)\vee1)^{2\alpha(\beta+1)+\epsilon} \right]
  (s-s')^2\; ,
\end{equation}
which proves that $k^{1/2} n^{-1}\tilde S_n$ converges uniformly to
zero on compact sets.

\subsection{Proof of Corollary \ref{cor:Hill}}
Using the decomposition~(\ref{eq:decomp-non-uniform}), and the
identity $\int_0^\infty (1+s)^{-1} \, T(s)\, \d s = \gamma$, we have
\begin{align}
  \hat \gamma_n - \gamma & = \int_0^\infty \frac{\hat e_n^*(s)}{1+s}
  \, \d s = \int_0^\infty \frac{R_n(s+\xi_n(1+s))}{1+s} \, \d s -
  \gamma R_n(\xi_n) \nonumber \\
  & + \frac {\bar F_Z(u_n)}{n\bar F(u_n)} \int_0^\infty \frac{\tilde
    S_n(s+\xi_n(1+s))}{1+s} \, \d s - \gamma \frac {\bar
    F_Z(u_n)}{n\bar F(u_n)} \tilde S_n(\xi_n) \label{eq:toto-1}
  \\
  & + \int_0^\infty \frac{\bar T_n(s+\xi_n(1+s))}{1+s} \, \d s -
  \gamma \bar T_n(\xi_n) \label{eq:toto-2} \; .
\end{align}
We must prove that the terms in~(\ref{eq:toto-1})
and~(\ref{eq:toto-2}) are $O_P(\eta^*(u_n))$ and that
\begin{align}
  \sqrt k \int_0^\infty (1+s)^{-1} R_n(s+\xi_n(1+s)) \, \d s
  \convdistr \int_0^\infty \frac{W \circ T(s)}{1+s} \, \d s = \gamma
  \int_0^1 \frac{W(t)}t \, \d t \; . \label{eq:int-W}
\end{align}
To prove~(\ref{eq:int-W}), we follow the lines of \cite[Section
9.1.2]{resnick:2007}.  We must prove that we can apply continuous
mapping.  To do this, it suffices to establish that for any
$\delta>0$ we have
$$
\lim_{M\to\infty}\limsup_{n\to\infty} A_{n,M} = 0 \; ,
$$
where
$$
A_{n,M} = \pr \left(\sqrt k \int_{M}^{\infty} \left|\frac{1}{k}
    \sum_{j=1}^n \left(1_{\{Y_j>u_ns\}} - P \left(Y_j >
        u_ns|{\cal X}\right)\right)\right| \frac{ds}{s} >
  \delta\right) \; .
$$
By Markov's inequality, conditional independence and Potter's bound
\cite[Theorem 1.5.6]{bingham:goldie:teugels:1989} , we have, for
some $\epsilon>0$,
\begin{align*}
  A_{n,M} \leq C \frac{\sqrt n}{\sqrt k } \int_M^\infty \frac{
    \pr^{1/2}(Y>u_ns)}s \, \d s \leq C \, \sqrt {\frac{n \bar F(u_n)}
    k} \int_M^\infty s^{-1-\alpha/2+\epsilon} \, \d s \leq C
  M^{-\alpha/2+\epsilon} \to 0
\end{align*}
as $M\to\infty$, since $k=n\bar F(u_n)$. This
proves~(\ref{eq:int-W}). To get a bound for~(\ref{eq:toto-2}), we
use~(\ref{eq:rate-non-uniform}) which yields, for all $t\geq0$,
\begin{align*}
  |\bar T_n(t)| \leq C \eta^*(u_n) (1+t)^{-\alpha+\rho\pm\epsilon} \; .
\end{align*}
Thus $\bar T_n(\xi_n) = O_P(\eta^*(u_n))$ and $ |\bar
T_n(s+\xi_n(1+s))| \leq C \eta^*(u_n) (1+s)^{-\alpha+\rho+\epsilon}
(1+\xi_n)^{-\alpha} $, thus
\begin{align*}
  \int_0^\infty \frac{|T_n(s+\xi_n(1+s))|}{1+s} \, \d s =
  O_P(\eta^*(u_n)) \; .
\end{align*}
We finally bound~(\ref{eq:toto-1}).
\begin{align*}
  \int_0^\infty \frac{n^{-1} \tilde S_n(s+\xi_n(1+s))}{1+s} \, \d s &
  = \int_{\xi_n}^\infty \frac{n^{-1} \tilde S_n(u)}{1+u} \, \d u \; .
\end{align*}
Since $\xi_n=o_P(1)$, we can write
\begin{align*}
  \pr\left( k^{1/2} \int_{\xi_n}^\infty \frac{n^{-1} \tilde
      S_n(u)}{1+u} \, \d u > \epsilon\right) & \leq \pr(\xi_n>1) + \pr
  \left( k^{1/2} \int_{1}^\infty \frac{n^{-1} |\tilde S_n(u)|}{1+u}
    \, \d u > \epsilon \right) \\
  & \leq o(1) + \frac{k^{1/2}}{n\epsilon} \int_1^\infty
  \frac{\esp^{1/2}[\tilde S_n^2(s)]}{1+s} \; \d s
\end{align*}
Applying~(\ref{eq:control-Hermite})
and~(\ref{eq:rate-G_n-nonuniform}) yields
\begin{align*}
  \int_1^\infty \frac{n^{-1} \esp^{1/2}[\tilde S_n^2(s)]}{1+s} \, \d s
  & \leq C \rho_n^{q/2} \eta^*(u_n) \int_0^\infty
  s^{-\alpha(\beta+1)/2+\epsilon-1} \, \d s = o_P(k^{-1/2}) \; .
\end{align*}
Thus the first term in~(\ref{eq:toto-1}) is $o_P(k^{-1/2})$, and so
is the second term since $k^{1/2} n^{-1} \tilde S_n$ converges
uniformly to zero on compact sets. This concludes the proof of
Corollary~\ref{cor:Hill}.

\subsection{Second order  regular variation}
\label{sec:proof-s-o-c}

The main tool in the study of the tail of the product $YZ$ is the
following bound. For any $\epsilon>0$, there exists a constant $C$
such that, for all $y>0$,
\begin{equation}\label{eq:bound-1}
  \frac{\pr(yZ_1>x)}{\pr(Z_1>x)} \leq C(1\vee y^{\alpha+\epsilon}) \; .
\end{equation}
This bound is trivial if $y<1$ and follows from Potter's bounds if
$y>1$.

\begin{proof}[Proof of Lemma~\ref{lem:convergence-uniforme-hermite}]
  By Breiman's Lemma, we know that for any sequence $u_n$ such that
  $u_n\to \infty$,
\begin{gather}\label{eq:bound-2}
  \lim_{n\to\infty} G_n(x,s) = \lim_{n\to\infty}
  \frac{\pr(\sigma(x)Z_1>(1+s)u_n)}{\pr(Z>u_n) } ) = \sigma^\alpha(x)
  (1+s)^{-\alpha} = \sigma^\alpha(x) T(s) \; .
\end{gather}
If $\esp[\sigma^{\alpha+\epsilon}(X)]<\infty$, then the
bound~(\ref{eq:bound-1}) implies that the
convergence~(\ref{eq:bound-2}) holds in $L^p(\mu)$ for any $p$ such
that $p\alpha<\alpha+\epsilon$, uniformly with respect to $s$, i.e.
\begin{align*}
  \lim_{n\to\infty} \esp[\sup_{s\geq0} |G_n(X,s) - \sigma^\alpha(X)
  T(s)|^p] = 0 \; .
\end{align*}
\end{proof}
Before proving Proposition~\ref{prop:transfert-second-ordre}, we
need the following lemma which gives a non uniform rate of
convergence.

\begin{lem}
  \label{lem:rate-excess-eta}
  If~(\ref{eq:Pareto-assumption}),~(\ref{eq:representation})
  and~(\ref{eq:borne-eta}) hold, if $\eta^*$ is regularly varying at infinity
  with index $\rho$, for some $\rho\leq0$,
  then~
  for any $\epsilon>0$, there exists a constant $C$ such that
   \begin{gather}
     \forall t\geq1 \;, \ \ \forall z>0 \;, \ \ \left|
       \frac{\pr(Z>zt)}{\pr(Z>t)} -z^{-\alpha} \right| \leq C
     \eta^*(t) z^{-\alpha+\rho} (z\vee z^{-1})^\epsilon \; .
     \label{eq:rate-non-uniform}
   \end{gather}
\end{lem}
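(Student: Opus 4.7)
The plan is to exploit the representation~(\ref{eq:representation}) to factor out the principal Pareto behaviour. Writing $A(t,z) = \int_t^{zt} \eta(s)/s\,\d s$, we get
$$
\frac{\pr(Z>zt)}{\pr(Z>t)} = z^{-\alpha}\exp A(t,z),\qquad \frac{\pr(Z>zt)}{\pr(Z>t)} - z^{-\alpha} = z^{-\alpha}\bigl(e^{A(t,z)}-1\bigr).
$$
Using the elementary inequality $|e^x-1|\leq |x|e^{|x|}$, the proof reduces to showing that $|A(t,z)|$ is bounded uniformly in $(t,z)$ and satisfies the pointwise estimate $|A(t,z)|\leq C\eta^*(t)\, z^{\rho}(z\vee z^{-1})^{\epsilon}$. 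The uniform boundedness is immediate: by~(\ref{eq:borne-eta}), $|A(t,z)|\leq C \bigl|\int_{t\wedge zt}^{t\vee zt}\eta^*(s)/s\,\d s\bigr|$, and since $\eta^*$ is bounded and regularly varying at infinity with index $\rho\leq 0$, Karamata's theorem gives $\int_1^\infty \eta^*(s)/s\,\d s<\infty$; this lets us absorb the factor $e^{|A(t,z)|}$ into the constant.

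The pointwise estimate is obtained by substituting $s=tu$ and writing $A(t,z) = \int_1^z \eta(tu)/u\,\d u$, so that
$$
|A(t,z)|\ \leq\ C\,\eta^*(t)\, \left|\int_{1}^{z}\frac{\eta^*(tu)}{\eta^*(t)}\,\frac{\d u}{u}\right|.
$$
I would now invoke Potter's bound \cite[Theorem~1.5.6]{bingham:goldie:teugels:1989}: for any $\epsilon'>0$ there exist $C,t_0$ such that for $t\geq t_0$ and $u>0$,
$$
\frac{\eta^*(tu)}{\eta^*(t)}\ \leq\ C\, u^{\rho}(u\vee u^{-1})^{\epsilon'}.
$$
Splitting the integral into the regimes $z\geq 1$ and $z<1$ and computing the resulting elementary integrals of powers of $u$ yields a bound of the form $C\eta^*(t)\,\Phi_\rho(z)$ with $\Phi_\rho(z)\leq C z^{\rho}(z\vee z^{-1})^{\epsilon}$ after a mild enlargement of $\epsilon'$ into $\epsilon$. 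Multiplying by the outer $z^{-\alpha}$ produces exactly the stated right-hand side.

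The main obstacle is twofold. First, Potter's bound applies uniformly only for $t\geq t_0$ and $u$ such that $tu\geq t_0$; for $t\in[1,t_0]$ or when $zt$ is not large, I would use boundedness of $\eta^*$ on compacts and the fact that $\eta^*$ is bounded away from $0$ on $[1,t_0]$ to absorb such ranges into the constant $C$. Second, when $z$ is very small, the integrand $\eta^*(tu)/u$ need not be integrable at $0$, so one has to pay a $\log(1/z)$ (or better, a $z^{-\epsilon}$) factor; this is precisely what the $(z\vee z^{-1})^\epsilon$ slack in the statement is designed to accommodate, and is the reason the bound is formulated with the $\pm\epsilon$ tail rather than the sharp Karamata exponent.
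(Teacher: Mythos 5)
Your skeleton is the same as the paper's: factor out $z^{-\alpha}$, write the error as $z^{-\alpha}\bigl(\mathrm e^{A(t,z)}-1\bigr)$ with $A(t,z)=\int_1^z \eta(ts)s^{-1}\,\d s$, and control $|\mathrm e^{A}-1|$ by $|A|\mathrm e^{|A|}$ (the paper uses $|\mathrm e^u-1|\le u\mathrm e^{u_+}$ and the monotonicity of $\eta^*$ where you invoke Potter; that difference is cosmetic). But there are two genuine gaps. First, the assertion that Karamata's theorem gives $\int_1^\infty \eta^*(s)s^{-1}\,\d s<\infty$, so that $\mathrm e^{|A(t,z)|}$ can be ``absorbed into the constant,'' is false when $\rho=0$: the lemma explicitly allows $\eta^*$ slowly varying, and for the paper's own example $\eta^*(x)=1/\log x$ one has $\int_1^\infty \eta^*(s)s^{-1}\,\d s=\infty$ and $A(t,z)\asymp \log\log(tz)-\log\log t$, which is unbounded in $z$. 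What is true, and what the paper actually proves, is that $z\mapsto \exp\int_1^{z\vee(1/z)}\eta^*(ts)s^{-1}\,\d s$ is slowly varying by Karamata's representation theorem, hence $O\bigl((z\vee z^{-1})^{\epsilon/2}\bigr)$; the exponential factor has to be absorbed into the $(z\vee z^{-1})^{\epsilon}$ slack, not into $C$.

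Second, your treatment of the regime where $zt$ is not large does not close. Your proposed patch (boundedness of $\eta^*$ on compacts, $\eta^*$ bounded away from zero on $[1,t_0]$) covers $t\in[1,t_0]$, but the problematic case is $t$ large and $z$ so small that $tz<1$. There $|A(t,z)|$ can be of order $\eta^*(0)\log(1/(tz))$, so $\mathrm e^{|A|}$ is polynomially large in $1/z$ with a fixed exponent that is not small, and the route through $|\mathrm e^{A}-1|\le |A|\mathrm e^{|A|}$ collapses. The paper isolates this as a separate third case and abandons the representation altogether, using the trivial bound $\bigl|\pr(Z>zt)/\pr(Z>t)-z^{-\alpha}\bigr|\le 1/\pr(Z>t)+z^{-\alpha}\le C z^{-\alpha-\epsilon/2}$ together with $t\le z^{-1}$ and $t^{\rho-\epsilon}=O(\eta^*(t))$; you need to add this case explicitly. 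A minor further remark: for $z\ge1$ both your computation and the paper's produce $z^{-\alpha+\epsilon}$ rather than $z^{-\alpha+\rho+\epsilon}$, so the factor $z^{-\alpha+\rho}$ in the display is really $z^{-\alpha}(z\wedge1)^{\rho}$; your claimed pointwise estimate $|A(t,z)|\le C\eta^*(t)z^{\rho}(z\vee z^{-1})^{\epsilon}$ cannot hold as stated for $z\ge1$ and $\rho<0$, since the integral $\int_1^z u^{\rho+\epsilon'-1}\,\d u$ tends to a positive constant rather than decaying like $z^{\rho+\epsilon}$.
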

\begin{proof}
  Since $\eta^*$ is decreasing, using the bound $|\mathrm e^u-1|\leq u
  \mathrm e^{u_+}$ with $u_+ = \max(u,0)$, we have, for all $z>0$,
  \begin{align}
    \left| \frac{\pr(Z>zt)}{\pr(Z>t)} - z^{-\alpha} \right| & =
    z^{-\alpha} \left| \exp\int_1^z \frac{\eta(ts)}s \, \d s -
      1 \right|  \nonumber \\
    & \leq Cz^{-\alpha} \int_{z\wedge1}^{z\vee1} \frac{\eta^*(st)}s \;
    \d s \; \exp \int_{z\wedge1}^{z\vee1} \frac{\eta^*(st)}s \; \d s
    \nonumber    \\
    & \leq Cz^{-\alpha} \log(z) \, \eta^*(t(z\wedge1)) \; \exp
    \int_{z\wedge1}^{z\vee1} \frac{\eta^*(st)}s \;
    \d s \nonumber    \\
    & \leq C z^{-\alpha} (z\wedge1)^{\rho-\epsilon/2} \, \eta^*(t) \;
    \exp \int_{z\wedge1}^{z\vee1} \frac{\eta^*(st)}s \; \d s \; .
  \label{eq:reste-un-bout}
  \end{align}
  We now distinguish three cases. Recall that $\eta^*$ is decreasing.
\begin{itemize}
\item If $z\geq1$, then $z \to \exp\int_1^z s^{-1} {\eta^*(s)} \, \d
  s$ is a slowly varying function by Karamata's representation
  Theorem, and is $O(z^{\epsilon/2})$ for any $\epsilon>0$. Plugging
  this bound into~(\ref{eq:reste-un-bout})
  yields~(\ref{eq:rate-non-uniform}).
\item If $z<1$ and $tz\geq1$, then
  \begin{align*}
    \exp \int_{z}^{1} \frac{\eta^*(st)}s \; \d s = \exp
    \int_1^{1/z} \frac{\eta^*(stz)}s \, \d s \leq \exp \int_1^{1/z}
    \frac{\eta^*(s)}s \, \d s = O(z^{-\epsilon/2})
  \end{align*}
  for any $\epsilon>0$ by the same argument as above and this
  yields~(\ref{eq:rate-non-uniform}).
\item If $tz<1$, then $t^r \leq z^{-r}$ for any $r>0$ and
  $t^{\rho-\epsilon}=O(\eta^*(t))$ for any $\epsilon>0$.  Thus
\begin{align*}
  \left| \frac{\pr(Z>zt)}{\pr(Z>t)} - z^{-\alpha} \right| & \leq
  \frac1{\pr(Z>t)} + z^{-\alpha} \leq C t^{\alpha+\epsilon/2} +
  z^{-\alpha} \leq C z^{-\alpha-\epsilon/2} \\
  & \leq C z^{-\alpha+\rho-\epsilon} t^{\rho-\epsilon/2} \leq C
  z^{-\alpha+\rho-\epsilon} \eta^*(t) \; .
\end{align*}
\end{itemize}
This concludes the proof of~(\ref{eq:rate-non-uniform}).
\end{proof}

The following bound is used in the proof of prove
Theorem~\ref{thm:practical-1}.
\begin{lem}
  If~(\ref{eq:Pareto-assumption}),~(\ref{eq:representation})
  and~(\ref{eq:borne-eta}) hold, if $\eta^*$ is regularly varying at infinity
  with index $\rho$, for some $\rho\leq0$, then there exists a constant $C$
  such that for all $t\geq1$ and $b>a>0$,
\begin{align}
  \left| \frac{\pr(at < Z \leq bt)}{\pr(Z>t)} -
    (a^{-\alpha}-b^{-\alpha})\right| & \leq C \eta^*(t)
  (a\wedge 1)^{-\alpha+\rho-\epsilon} (b-a) \; .
   \label{eq:pour-th-random}
\end{align}
\end{lem}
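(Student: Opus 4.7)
The approach I would take is to reduce the stated inequality to a mean-value estimate for the function $D(x):=\pr(Z>xt)/\pr(Z>t)-x^{-\alpha}$. Writing
$$\frac{\pr(at<Z\leq bt)}{\pr(Z>t)}-(a^{-\alpha}-b^{-\alpha})=D(a)-D(b),$$
I reduce the problem to bounding $|D(a)-D(b)|$. A straightforward triangle-inequality bound via Lemma~\ref{lem:rate-excess-eta} yields the right exponent in $(a\wedge 1)$ but misses the crucial linear $(b-a)$ factor, so I appeal instead to the mean value theorem: $|D(a)-D(b)|\leq (b-a)\sup_{x\in[a,b]}|D'(x)|$.

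To compute $D'$, I differentiate the representation~(\ref{eq:representation}), which gives the density $f(y)=\pr(Z>y)(\alpha-\eta(y))/y$. Hence
$$D'(x)=-\frac{tf(xt)}{\pr(Z>t)}+\alpha x^{-\alpha-1}=-\frac{\alpha}{x}D(x)+\frac{\eta(xt)}{x}\cdot\frac{\pr(Z>xt)}{\pr(Z>t)}.$$
The first term is controlled directly by Lemma~\ref{lem:rate-excess-eta} divided by $x$. For the second, the pointwise bound $|\eta|\leq C\eta^*$ from~(\ref{eq:borne-eta}), together with Potter's inequality applied to the regularly-varying functions $\eta^*$ (index $\rho$) and $\pr(Z>\cdot)$ (index $-\alpha$), delivers a bound of the same order. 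Combining gives an estimate of the form $|D'(x)|\leq C\eta^*(t)\,x^{-\alpha+\rho-1}(x\vee x^{-1})^{\epsilon}$, which is decreasing in $x$; taking the supremum over $[a,b]$ at the endpoint $x=a$ and multiplying by $(b-a)$ then produces the stated bound, with the $(a\wedge 1)$ absorbing the trivial case $a\geq 1$.

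The main technical obstacle is the regime $xt<1$, which can occur when $x$ is very small, since Potter's inequality for $\eta^*$ and for $\pr(Z>\cdot)$ is only valid at infinity. In that range I will instead use the boundedness of $\eta^*$ guaranteed by Assumption~(SO), the trivial bound $\pr(Z>xt)\leq 1$, and the lower estimate $\pr(Z>t)\geq ct^{-\alpha-\epsilon}$ from Potter applied to $\pr(Z>\cdot)$ at $t$ large; a short computation mirroring the proof of Lemma~\ref{lem:rate-excess-eta} (third bullet) shows that the same pointwise estimate on $|D'|$ persists. Splitting the range of $x$ into the three regimes $x\geq 1$, $x<1$ with $xt\geq 1$, and $xt<1$, and unifying the pointwise bounds, then yields~(\ref{eq:pour-th-random}) after multiplication by $(b-a)$.
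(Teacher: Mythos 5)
Your proposal is correct in substance and follows essentially the same route as the paper, merely in differential rather than integral form. The paper sets $\ell(y)=y^\alpha\pr(Z>y)$ and uses the integral equation~(\ref{eq:equation}), $\ell(y)=\ell(1)+\int_1^y\eta(s)\ell(s)\,\frac{\d s}{s}$, to represent $\ell(bt)/\ell(at)-1$ as an integral over $[a,b]$ whose integrand is controlled by $|\eta|\leq C\eta^*$ together with Potter-type bounds; this is exactly the integrated version of your density identity $f(y)=\pr(Z>y)(\alpha-\eta(y))/y$, and both arguments extract the factor $(b-a)$ from the length of the integration interval while invoking Lemma~\ref{lem:rate-excess-eta} (resp.\ the analogous bound~(\ref{eq:l/l-a}) on $\ell(at)/\ell(t)-1$) for the remaining piece. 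Two minor remarks. First, since $\eta$ is only assumed measurable, $D$ is absolutely continuous rather than everywhere differentiable, so the classical mean value theorem should be replaced by $|D(a)-D(b)|\leq\int_a^b|D'(x)|\,\d x\leq (b-a)\sup_{x\in[a,b]}|D'(x)|$ with the supremum understood as an essential supremum; the paper's integral-equation formulation sidesteps this point. Second, your bound $|D'(x)|\leq C\eta^*(t)\,x^{-\alpha+\rho-1}(x\vee x^{-1})^{\epsilon}$ evaluated at $x=a<1$ yields the exponent $-\alpha+\rho-1-\epsilon$ on $a\wedge1$, not the $-\alpha+\rho-\epsilon$ displayed in~(\ref{eq:pour-th-random}); this is not a defect of your argument, since the paper's own chain --- the intermediate bound~(\ref{eq:tightness-Z-bis}) followed by~(\ref{eq:bound-1}) applied to $\eta^*$ --- produces the same extra factor $(a\wedge1)^{-1}$.
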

\begin{proof}
  The bound~(\ref{eq:pour-th-random}) follows from the following one
  and~(\ref{eq:bound-1}) applied to the function $\eta^*$.
\begin{align}
  \left| \frac{\pr(at < Z \leq bt)}{\pr(Z>t)} -
    (a^{-\alpha}-b^{-\alpha})\right| & \leq C \eta^*((a\wedge 1)t) (a\wedge
  1)^{-\alpha-1-\epsilon} (b-a) \label{eq:tightness-Z-bis}
\end{align}
Let $\ell$ be the function slowly varying at infinity that appears
in~(\ref{eq:Pareto-assumption}), defined on $[0,\infty)$ by $\ell(t)
= t^\alpha\pr(Z>t)$.  Assumption (SO) implies that
\begin{align} \label{eq:representation-ell}
  \ell(t) = \ell(1) \exp\int_1^t \eta(s) \; \frac{\d s}s
\end{align}
 where the function $\eta$ is
measurable and bounded. This implies that the function $\ell$ is the
solution of the equation
\begin{align} \label{eq:equation}
  \ell(t) = \ell(1) + \int_1^t \eta(s) \ell(s) \; \frac{\d s} s \; .
\end{align}
Conversely, if $\ell$ satisfies~(\ref{eq:equation})
then~(\ref{eq:representation-ell}) holds. We first prove the
following useful bound. For any $\epsilon>0$, there exists a
constant $C$ such that for any $t\geq1$ and all $a>0$,
\begin{align} \label{eq:ell(at)/ell(t)}
  \frac{\ell(at)}{\ell(t)} \leq C a^{\pm\epsilon} \; ,
\end{align}
where we denote $a^{\pm\epsilon} = \max(a^\epsilon,a^{-\epsilon})$.
Indeed, if $at\geq1$, then, $\eta^*$ being decreasing, we have
\begin{align*}
  \frac{\ell(at)}{\ell(t)} \leq C\exp \int_{a\wedge1}^{a\vee1}
  \frac{\eta^*(ts)}s \, \d s \leq C\exp \int_1^{a\vee(1/a)}
  \frac{\eta^*(ts)}s \, \d s \leq C a^{\pm\epsilon} \; ,
\end{align*}
since the latter function is slowly varying by Karamata's
representation theorem. If $at<1$, then $\ell(at)\leq 1$ and
$\ell^{-1}(t) = o(t^\epsilon) = o(a^{-\epsilon})$. This
proves~(\ref{eq:ell(at)/ell(t)}).  Next,
applying~(\ref{eq:equation}) and~(\ref{eq:ell(at)/ell(t)}), for any
$\epsilon>0$ and $0<a<b$, we have
\begin{eqnarray}
  \label{eq:l/l}
  \left| \frac{\ell(bt)}{\ell(at)} - 1 \right| & =& \left| \int_a^b
    \eta(st) \, \frac{\ell(st)}{\ell(at)} \, \frac{\d s}s \right|
\leq  Ca^{\pm\varepsilon}\left| \int_a^b
    \eta(st) \, \frac{\ell(st)}{\ell(t)} \, \frac{\d s}s
    \right|\nonumber\\
    &\leq&
  C \eta^*(at) \int_a^b s^{\pm 2\epsilon-1} \; \d s \leq C \eta^*(at) \,
  a^{\pm\epsilon-1} (b-a) \; .
\end{eqnarray}
Applying~(\ref{eq:ell(at)/ell(t)}) and~(\ref{eq:l/l}), we also
obtain
\begin{align}
  \label{eq:l/l-a}
  \left| \frac{\ell(at)}{\ell(t)} - 1 \right| & \leq C
  \eta^*((a\wedge1)t) \, a^{\pm\epsilon} \; .
\end{align}
For $\epsilon>0$ and $0<a<b$, we have
\begin{align*}
  \frac{\pr(at < Z \leq bt)}{\pr(Z>t)} - (a^{-\alpha} - b^{-\alpha}) &
  = a^{-\alpha} \left\{ \frac{\ell(at)}{\ell(t)} - 1 \right\} -
  b^{-\alpha} \left\{\frac{\ell(bt)}{\ell(t)}-1\right\} \ \\
  & = (a^{-\alpha} - b^{-\alpha}) \left\{ \frac{\ell(at)}{\ell(t)} - 1
  \right\} - b^{-\alpha} \frac{\ell(at)}{\ell(t)}
  \left\{\frac{\ell(bt)}{\ell(at)}-1\right\} \; ,
\end{align*}
which yields
\begin{align*}
  \left| \frac{\pr(at < Z \leq bt)}{\pr(Z>t)} - (a^{-\alpha} -
    b^{-\alpha}) \right| & \leq C \eta^*((a\wedge1)t)
  a^{\alpha-1\pm\epsilon} (b-a) \; .
\end{align*}

\end{proof}

\begin{proof}[Proof of Proposition~\ref{prop:transfert-second-ordre}]
  Define the function $\bar \sigma$ by $\bar \sigma(x) =
  \sigma(x)\vee1$. Applying~(\ref{eq:rate-non-uniform}) with
  $(1+s)/\sigma(x)$ instead of $z$ and $u_n$ for $t$, we get
\begin{align}
  \left|G_n(x,s) - \sigma^\alpha(x)T(s) \right| & = \left|
    \frac{\pr(\sigma(x)Z>u_n(1+s))}{\pr(Z>u_n)} - \sigma^\alpha(x)
    T(s) \right| \nonumber \\
  & \leq C \eta^*(u_n) \bar\sigma(x)^{\alpha(\beta+1)+\epsilon}
  (1+s)^{-\alpha(\beta+1)+\epsilon} \; . \label{eq:rate-G_n-nonuniform}
\end{align}
This implies, for all $p$ such that
$\esp[\sigma^{p\alpha(\beta+1)+\epsilon}(X)]<\infty$, that
\begin{align*}
  \esp \left[ \sup_{s\geq1} |G_n(X,s) - T(s) \sigma^\alpha(X)|^p
  \right] = O(\{\eta^*(u_n)\}^p) \; .
\end{align*}
This proves~(\ref{eq:rate-G_n}) which in turn
implies~(\ref{eq:rate-T_n}) since $T_n(s) = \frac{\bar F(u_n)}{\bar
F_Z(u_n)} \esp[G_n(X,s)]$.  In order to prove that $\bar F_Y \in
2RV(-\alpha,\eta^*)$, denote $\tilde \ell(y) = y^\alpha\pr(Y>y)$. We
will prove that there exists a measurable function $\tilde\eta$ such
that~(\ref{eq:equation}) holds with $\tilde\ell$ and $\tilde \eta$.
Denote $\xi=\sigma(X)$.  Applying~(\ref{eq:equation}) and using the
independence of $\xi$ and $Z$, we have
\begin{align*}
  \tilde \ell(y) & = \esp[\xi^\alpha \ell(y/\sigma)] = \ell(1)
  \esp[\xi^\alpha] + \esp\left[\xi^\alpha \int_1^{y/\xi} \eta(s)
    \ell(s) \frac{\d s}s \right] \\
  & = \ell(1) \esp[\xi^\alpha] + \esp\left[\xi^\alpha \int_\xi^{y}
    \eta(s/\xi) \ell(s/\xi) \frac{\d s}s \right] \\
  & =\esp\left[ \xi^\alpha \left\{ \ell(1) -\int_1^\xi \eta(s/\xi)
      \ell(s/\xi) \frac{\d s}s \right\} \right] + \esp\left[\xi^\alpha
    \int_1^{y} \eta(s/\xi) \ell(s/\xi) \frac{\d s}s \right] \\
  & =\esp\left[ \xi^\alpha \left\{ \ell(1) + \int_{1/\xi}^1 \eta(s)
      \ell(s) \frac{\d s}s \right\} \right] + \int_1^{y}
  \esp[\xi^\alpha \eta(s/\xi) \ell(s/\xi)] \frac{\d s}s \\
  & =\esp \left[ \xi^\alpha \ell(1/\xi) \right] + \int_1^{y}
  \esp[\xi^\alpha \eta(s/\xi) \ell(s/\xi)] \frac{\d s}s = \tilde
  \ell(1) + \int_1^t \tilde \eta(s) \tilde \ell(s) \frac{\d s}s \; ,
\end{align*}
where we have defined
\begin{align*}
  \tilde \eta(s) = \frac{\esp[\xi^\alpha \eta(s/\xi)
    \ell(s/\xi)]} {\esp[\xi^\alpha \ell(s/\xi)]} =
  \frac{\esp[\xi^\alpha \eta(s/\xi) \ell(s/\xi)/\ell(s)]}
  {\esp[\xi^\alpha \ell(s/\xi)/\ell(s)]} \; .
\end{align*}
The denominator of the last expression is bounded away from zero.
Indeed, let $\epsilon>0$ be such that $\pr(\xi\geq\epsilon)>0$. Then
\begin{align*}
  \esp[\xi^\alpha \ell(s/\xi)/\ell(s)] = \frac{\pr(\xi
    Z>s)}{\pr(Z>s)} \geq \frac{\pr(\xi\geq\epsilon) \pr(Z>s/\epsilon)}
  {\pr(Z>s)} \; .
\end{align*}
Since $Z$ has a regularly varying tail, it holds that $\inf_{s\geq0}
\pr(Z>s/\epsilon)/\pr(Z>s) >0$. This proves our claim. Thus,
applying~(\ref{eq:bound-1}) with the regularly varying function
$\eta^*$, we get, for $\epsilon>0$ such that
$\exp[\xi^{\alpha-\rho+\epsilon}]<\infty$,
\begin{align*}
  |\tilde \eta(s)| \leq C \eta^*(s) \, \esp[\xi^\alpha
  \{\eta^*(s/\xi)/\eta^*(s)\} \{\ell(s/\xi)/\ell(s)\}] \leq C
  \eta^*(x) \, \esp[\xi^\alpha(\xi\vee1)^{-\rho+\epsilon}]\; .
\end{align*}
Thus $\tilde \ell$ satisfies equation~(\ref{eq:equation}) with
$\tilde\eta$ such that $|\eta|\leq C\eta^*$, thus $Y\in
2RV(-\alpha,\eta^*)$.
\end{proof}

\section*{Acknowledgement}
The research of the first author was supported by NSERC grant. The
research of the second author is partially supported by the ANR
grant ANR-08-BLAN-0314-02.


\begin{thebibliography}{10}

\bibitem{arcones:1994}
Miguel~A. Arcones.
\newblock Limit theorems for nonlinear functionals of a stationary {G}aussian
  sequence of vectors.
\newblock {\em The Annals of Probability}, 22(4):2242--2274, 1994.

\bibitem{billingsley:1968}
Patrick Billingsley.
\newblock {\em Convergence of probability measures}.
\newblock New York, Wiley, 1968.

\bibitem{bingham:goldie:teugels:1989}
Nicholas~H. Bingham, Charles~M. Goldie, and Jozef~L. Teugels.
\newblock {\em Regular variation}.
\newblock Cambridge University Press, Cambridge, 1989.

\bibitem{BreidtCratoLima1998}
F.~Jay Breidt, Nuno Crato, and Pedro de~Lima.
\newblock The detection and estimation of long memory in stochastic volatility.
\newblock {\em Journal of Econometrics}, 83(1-2):325--348, 1998.

\bibitem{Breiman1965}
L.~Breiman.
\newblock On some limit theorems similar to the arc-sin law.
\newblock {\em Teor. Verojatnost. i Primenen.}, 10:351--360, 1965.

\bibitem{DavisMikosch2001}
Richard~A. Davis and Thomas Mikosch.
\newblock Point process convergence of stochastic volatility processes with
  application to sample autocorrelation.
\newblock {\em Journal of Applied Probability}, 38A:93--104, 2001.
\newblock Probability, statistics and seismology.

\bibitem{DeoHsiehHurvichSoulier2006}
Rohit Deo, Mengchen Hsieh, Clifford~M. Hurvich, and Philippe
Soulier.
\newblock Long memory in nonlinear processes.
\newblock In {\em Dependence in probability and statistics}, volume 187 of {\em
  Lecture Notes in Statist.}, pages 221--244. Springer, New York, 2006.

\bibitem{drees:1998}
Holger Drees.
\newblock Optimal rates of convergence for estimates of the extreme value
  index.
\newblock {\em The Annals of Statistics}, 26(1):434--448, 1998.

\bibitem{Drees2003}
Holger Drees.
\newblock Weighted approximations of tail processes for {$\beta$}-mixing random
  variables.
\newblock {\em The Annals of Applied Probability}, 10(4):1274--1301, 2000.

\bibitem{Drees2002}
Holger Drees.
\newblock Tail empirical processes under mixing conditions.
\newblock In {\em Empirical process techniques for dependent data}, pages
  325--342. Birkh\"auser Boston, Boston, MA, 2002.

\bibitem{Drees2000}
Holger Drees.
\newblock Extreme quantile estimation for dependent data, with applications to
  finance.
\newblock {\em Bernoulli}, 9(4):617--657, 2003.

\bibitem{Einmahl1992}
John H.~J. Einmahl.
\newblock Limit theorems for tail processes with application to intermediate
  quantile estimation.
\newblock {\em Journal of Statistical Planning and Inference}, 32(1):137--145,
  1992.

\bibitem{genest:ghoudi:remillard:1996}
Christian Genest, Kilani Ghoudi, and Bruno R{\'e}millard.
\newblock A note on tightness.
\newblock {\em Statistics and Probability Letters}, 27(4):331--339, 1996.

\bibitem{GiraitisSurgailis1999}
Liudas Giraitis and Donatas Surgailis.
\newblock Central limit theorem for the empirical process of a linear sequence
  with long memory.
\newblock {\em Journal of Statistical Planning and Inference}, 80(1-2):81--93,
  1999.

\bibitem{HoHsing1997}
Hwai-Chung Ho and Tailen Hsing.
\newblock Limit theorems for functionals of moving averages.
\newblock {\em The Annals of Probability}, 25(4):1636--1669, 1997.

\bibitem{Kulik2008a}
Rafa{\l} Kulik.
\newblock Sums of extreme values of subordinated long-range dependent
  sequences: moving averages with finite variance.
\newblock {\em Electronic Journal of Probability}, 13(32): 961--979, 2008.

\bibitem{Kulik2008b}
Rafal Kulik.
\newblock Empirical process of long-range dependent sequences when parameters
  are estimated.
\newblock {\em Journal of Statistical Planning and Inference}, 139(2):287--294,
  2009.

\bibitem{resnick:2007}
Sidney~I. Resnick.
\newblock {\em Heavy-tail phenomena}.
\newblock Springer Series in Operations Research and Financial Engineering.
  Springer, New York, 2007.
\newblock Probabilistic and statistical modeling.

\bibitem{Rootzen2009}
Holger Rootz\'{e}n.
\newblock Weak convergence of the tail empirical process for dependent
  sequences.
\newblock {\em Stochastic Processes and their Applications}, 119(2):468--490, 2009.

\bibitem{Taqqu2003}
Murad~S. Taqqu.
\newblock Fractional {B}rownian motion and long-range dependence.
\newblock In {\em Theory and applications of long-range dependence}, pages
  5--38. Birkh\"auser Boston, Boston, MA, 2003.

\bibitem{whitt:2002}
Ward Whitt.
\newblock {\em Stochastic-process limits}.
\newblock Springer Series in Operations Research. Springer-Verlag, New York,
  2002.
\newblock An introduction to stochastic-process limits and their application to
  queues.

\end{thebibliography}


\end{document}